\numberwithin{equation}{section}
\newtheorem{theorem}{Theorem}[section]
\newtheorem{definition}[theorem]{Definition}
\newtheorem{corollary}[theorem]{Corollary}
\newtheorem{proposition}[theorem]{Proposition}
\newtheorem{remark}[theorem]{Remark}
\def\R{{\mathbb R}}
\def\C{{\mathbb C}}
\renewcommand{\leq}{\leqslant}
\renewcommand{\geq}{\geqslant}
\numberwithin{equation}{section}
\newcommand{\pt}{\partial_t}
\newcommand{\ptt}{\partial_t^2}
\newcommand{\pnu}{\partial_\nu}
\newcommand{\IOT}[1]{\int_{0}^T\int_\Omega #1 \,dxdt} 
\newcommand{\IGT}[1]{\int_{0}^T\int_{\Gamma_1} #1 \,d\sigma dt}
\newcommand{\IGOT}[1]{\int_{0}^T\int_{\Gamma_0} #1 \,d\sigma dt}
\newcommand{\IGST}[1]{\int_{0}^T\int_{\Gamma_*} #1 \,d\sigma dt}  
\newcommand{\RM}[1]{\textcolor{blue}{#1}}
\title{Exact Controllability for a Schr\"odinger equation with dynamic boundary conditions\footnote{The first author ... The second author has been supported by FONDECYT 3200830}}
\author{
	Alberto Mercado\thanks{Departamento de Matem\'atica, Universidad T\'ecnica Federico Santa Mar\'{\i}a, Casilla 110-V, Valpara\'{\i}so, Chile e-mail: alberto.mercado@usm.cl}
	\and 
	Roberto Morales\thanks{Departamento de Matem\'atica, Universidad T\'ecnica Federico Santa Mar\'{\i}a, Casilla 110-V, Valpara\'{\i}so, Chile e-mail: roberto.moralesp@usm.cl}
	}
\date{\today}
\begin{document}
\maketitle 
\begin{abstract}
In this paper, we study the controllability of a Schr\"odinger equation  with mixed boundary conditions on disjoint subsets of
the boundary: dynamic boundary condition of Wentzell type, and Dirichlet boundary condition. The main result of this article is given by 
new Carleman estimates for the associated adjoint system, where the weight function is constructed specially adapted to the geometry of the domain. Using these estimates, we  prove the exact controllability of the system with a boundary control acting only in the part of the boundary where the Dirichlet condition is imposed. Also, we obtain  a distributed exact controllability result for the system.
\end{abstract}

{\bf Keywords}  Schr\"odinger equation, dynamic boundary conditions, exact controllability, Carleman estimates.

{\bf AMS} 35Q41, 93B05, 93B07, 93C05, 35M13.

\section{Introduction}
In this article, controllability properties of a non-conservative Schr\"odinger equation with dynamic boundary conditions of Wentzell type are studied.
Let $\Omega\subset \R^n$, $n\geq 2$, be a bounded domain with regular boundary $\partial \Omega$ 
such that $\partial \Omega=\Gamma_0\cup \Gamma_1$ with $\Gamma_0$ and $\Gamma_1$ are two closed subsets and 
$\Gamma_0\cap \Gamma_1=\emptyset$;
a typical example is given by
 the annulus  $\Omega = \{ x \in \R^n \, : \, R_1 < |x| < R_2 \}$.
Then we consider  the  system given by
\begin{align}
	\label{intro:problem:01}
	\begin{cases}
		i\pt y+d\Delta y-\vec{q}_1\cdot \nabla y + q_0y=0,&\text{ in }\Omega\times (0,T),\\
		i\pt y_\Gamma -d\pnu y +\delta \Delta_\Gamma y_\Gamma -\vec{q}_{\Gamma,1}\cdot \nabla_\Gamma y_\Gamma+ q_{\Gamma,0} y_\Gamma=0,&\text{ on }\Gamma_1 \times (0,T),\\
		y=y_\Gamma,&\text{ on }\Gamma_1 \times (0,T),\\
		y=\mathbbm{1}_{\Gamma_*} h,&\text{ on }\Gamma_0 \times (0,T),\\
		(y(0),y_\Gamma (0))=(y_0,y_{\Gamma,0}),&\text{ in }\Omega\times \Gamma_1,
	\end{cases}
\end{align}
where $h\in L^2(\Gamma_* \times (0,T);\mathbb{C})$ (with $\Gamma_*\subseteq \Gamma_0$) is a  control acting on a subset of $\Gamma_0\times (0,T)$. Besides, $d>0$ and $\delta>0$ are parameters representing  the diffusion on the bulk and on the boundary, respectively. Moreover, $(\vec{q}_1,\vec{q}_{\Gamma,1})\in [L^{\infty}(\Omega\times (0,T);\mathbb{C})]^n\times [L^{\infty}(\Gamma_1\times (0,T);\mathbb{C})]^n$ and $(q_0,q_{\Gamma,0})\in L^{\infty}(\Omega\times (0,T);\mathbb{C}) \times L^\infty(\Gamma_1\times (0,T);\mathbb{C})$ are lower order potentials. We denote by $\Delta_\Gamma$ the Laplace-Beltrami operator, $\nabla_\Gamma$ is the tangential gradient and $\pnu y$ the normal derivative associated to the outward normal $\nu$ of $\Omega$.
We wish to investigate controllability properties of the system \eqref{intro:problem:01}. Roughly speaking, we are interested in finding conditions on the geometry of the domain and the parameters of the system such that the associated solution $(y,y_{\Gamma})$ can be driven to any given  state at time $T>0$. More precisely, we study the exact controllability of \eqref{intro:problem:01}, which can be defined as follows:
\begin{definition}
	System \eqref{intro:problem:01} is said to be \textbf{exactly controllable} at time $T>0$ 
	in space $X$ 
	if for every states $(y_0,y_{\Gamma,0}),(y_T,y_{\Gamma,T})\in 
	X $, there exists a (boundary) control $h\in L^2(\RM{\Gamma_*}\times (0,T);\mathbb{C})$ such that the associated solution $(y,y_\Gamma)$ satisfies
	\begin{align*}
	(y(T),y_{\Gamma}(T))=(y_T,y_{\Gamma,T}),\text{ in }\Omega\times \Gamma_1.
	\end{align*}
\end{definition}

We point out that the control $h$ acts  only on a portion of the boundary $\Gamma_0$. This means that the equation in the bulk is controlled directly by $h$, while the equation on the boundary $\Gamma_1$ is being controlled indirectly through the side condition $y=y_\Gamma$ on $\Gamma_1\times (0,T)$.

Linear and nonlinear Schr\"odinger equations have been intensely studied due to their applications to plasma physics and laser optics, see e.g. \cite{bandrauk1993molecules} and \cite{giusti1992vibrational}. On the other hand, Schr\"odinger equation can be represented as two suitable diffusion equations which their solutions are in duality, see \cite{nagasawa2012schrodinger}. In our case, the system \eqref{intro:problem:01} can  model the evolution of diffusion processes in an object $\Omega$ and its interaction with 
 another one, having a thick border $\Gamma_1$.

Controllability properties of the Schr\"odinger equation has been studied by several authors in the last 30 years. In \cite{lebeau1992controle}, G. Lebeau proved that the Geometric Control Condition (GCC) for the exact controllability of the wave equation is sufficient for the exact controllability of the Schr\"odinger equation in any time $T>0$. The proof of this result is based on the diadic decomposition of the Fourier representation of solutions of the Schr\"odinger equation which allows viewing them as superposition of an infinite sequence of solutions of wave equations with velocity of propagation tending to infinity. Besides, a particular case of this work is due to E. Machtyngier in \cite{machtyngier1994exact}. In this case, the exact controllability in $H^{-1}(\Omega)$ with $L^2$-boundary control and exact controllability in $L^2(\Omega)$ with $L^2$-controls supported in a neighborhood of the boundary are achieved. These results were obtained using multiplier techniques,  Hilbert Uniqueness Method (HUM) and Holmgren unique continuation principle, a property which   relies on the analyticity of the coefficients.

On the other hand, contrary to the results of hyperbolic equations, there are relevant controllability results for the Schr\"odinger equation in some situations in which the GCC is not fulfilled in any time $T$. We refer to \cite{jaffard1988controle} and \cite{burq1993controle} where the main results are based 
on the decomposition of  the Plate operator $\ptt +\Delta^2$ in  two conjugate Schr\"odinger operators in the following form:
\begin{align*}
\ptt u +\Delta^2u=(i\pt +\Delta)(-i\pt +\Delta)u.
\end{align*}

A useful tool to obtain observability inequalities is given by  the so-called Carleman estimates. In the case of Schr\"odinger equation with Dirichlet boundary conditions, several authors established  controllability and stability results using these estimates combining another techniques. In \cite{baudouin2002uniqueness}, the authors derived a Carleman estimate under a strict pseudoconvexity condition, or equivalently, under a strong convexity of the weight function in the space variable. We also mention that the observation is taken in regions according to the classical geometric conditions typically used for the wave equation. As a consequence of this result, the authors proved a Lipschitz stability estimate for an inverse problem for the potential of the Schr\"odinger operator in $H^1(\Omega)$. On the other hand, in \cite{mercado2008inverse} the authors proved a more general Carleman estimate replacing the strong pseudoconvexity condition for a weaker one, 
where the Hessian of the weight function may be degenerate at some points, allowing to consider weights of the form $\psi(x)=x\cdot e$, where $e\in \mathbb{R}^n$, resulting in  observation regions not satisfying 
the geometric control conditions. However, in these estimates only a part of the weighted-$H^1$ energy may be bounded by boundary/internal observations. For other results concerning controllability of the Schr\"odinger equation with Dirichlet boundary conditions we refer to \cite{phung2001observability}, \cite{aassila2003exact}, \cite{rosier2009exact}, \cite{rosier2009null} and \cite{zuazua2003remarks}.

Controllability properties of PDEs with dynamic boundary conditions have  also been intensively studied in the last  years. We mention  the works \cite{maniar2017null} and \cite{khoutaibi2020null} where the authors studied controllability properties of parabolic equations with these kind of boundary conditions. Using the approach of Fursikov and Imanuvilov and using the fact that the tangential derivatives of the associated weight function vanish, the authors determined the null controllability of such systems with arbitrary small regions. Based on these results, in \cite{zhang2019insensitizing} it is  studied the existence of insensitizing controls for systems of parabolic equations with Wentzell boundary conditions. We also refer the works \cite{ismailov2018inverse} and \cite{ismailov2019inverse} where some inverse problems for such models are considered.

Controllability properties in the  case of the wave equation with dynamic boundary conditions were also recently studied. In \cite{gal2017carleman}, the authors studied a  non-conservative  wave system acting in a domain $\Omega$ with the same topology structure that the case studied here. 
They  proved a global Carleman estimate with observations on both sides of the boundary, i.e., on the usual observation region satisfying the geometric control condition and on the part of the boundary where the dynamic conditions are given; as a consequence, it is obtained a controllability result  with two controls. On the other hand, in \cite{benabbas2018observability}, the authors determined that, in the particular situation when $\Omega$ is an $n-$dimensional interval,
the control acting on the subset of the boundary where  the Wentzell boundary conditions 
are imposed can be neglected. This result is based on Fourier expansions and a generalization of the Ingham inequalities due to Mehrenger for $n$-dimensional intervals.

Recently, some uniform stability results on the solutions for the Schr\"odinger and Ginzburg-Landau equations with dynamic boundary conditions were obtained in \cite{cavalcanti2016well} and \cite{correa2018complex}, respectively. We also mention the articles \cite{Lasiecka01} and \cite{Lasiecka02} where the authors studied Carleman estimates in $H^1$ and $L^2$, respectively, for non-conservative Schr\"odinger equations with different types of boundary conditions.
 However, to the best of the authors' knowledge, this is the first time that controllability properties of the Schr\"odinger equation with Wentzell boundary conditions like \eqref{intro:problem:01} are studied.

\subsection{General setting}
In this section, we set up the notation and terminology used in this paper. 
We consider the set  $\Gamma_1\subset \partial \Omega$ as an $(n-1)$-dimensional compact Riemannian submanifold equipped by the Riemannian metric $g$, induced by the natural embedding $\Gamma_1\subset \mathbb{R}^n$. It is possible to define the differential operators on $\Gamma_1$ in terms of the Riemannian metric $g$. 
However, for the purposes of this article, it will be enough to use the 
most important properties of the underlaying  operators and spaces.  The details can be found, for instance, in  \cite{Jost} and \cite{Taylor}. For the sake of completeness, we recall some of those properties.

The tangential gradient $\nabla_\Gamma$ of $y_\Gamma$ at each point $x \in \Gamma_1$ can be seen as the  projection of the standard Euclidean gradient $\nabla y$ onto the tangent space of $\Gamma_1$ at $x\in \Gamma_1$,
where $y_\Gamma$ is the trace of $y$ on $\Gamma_1$. That is to say, 
$$\nabla_\Gamma y_\Gamma=\nabla y- \nu \pnu y,$$
where $y =y_\Gamma$ on $\Gamma_1$ and $\pnu y$ is the normal derivative associated to the outward normal $\nu$. In this way,  the tangential divergence $\text{div}_\Gamma$ in  $\Gamma_1$ is defined by
\begin{align*}
\text{div}_\Gamma (F_\Gamma):H^1(\Gamma_1;\mathbb{R})\to \mathbb{R},\quad y_\Gamma \mapsto -\int_{\Gamma_1} F_\Gamma \cdot \nabla_\Gamma y_\Gamma\, d\sigma.
\end{align*} 

The Laplace Beltrami operator is given by 
$\Delta_\Gamma y_\Gamma =\text{div}_\Gamma (\nabla_\Gamma y_\Gamma)$ for all $y_\Gamma \in H^2(\Gamma_1;\mathbb{R})$. In particular, the surface divergence theorem holds:
\begin{align*}
\int_{\Gamma_1} \Delta_\Gamma yz\, d\sigma=-\int_{\Gamma_1} \nabla_\Gamma y\cdot \nabla_\Gamma z\, d\sigma,\quad \forall y\in H^2(\Gamma_1;\mathbb{R}),\quad \forall z\in H^1(\Gamma_1;\mathbb{R}).
\end{align*}

In order to simplify the notation, here and subsequently, the function spaces refer to complex-valued functions unless otherwise stated.

We introduce the Hilbert space $\mathcal{H}=L^2(\Omega)\times L^2(\Gamma_1)$  in $\mathbb{C}$ equipped with the
scalar product
\begin{align*}
\langle (u,u_\Gamma),(v,v_\Gamma) \rangle_{\mathcal{H}}= \int_\Omega u\overline{v} \,dx+ \int_{\Gamma_1} u_\Gamma \overline{v_\Gamma}\, d\sigma.
\end{align*}

Moreover, for $m\in \mathbb{N}$, we consider the space
\begin{align*}
H_{\Gamma_0}^m(\Omega)=\{u\in H^m(\Omega)\,:\, u=0\text{ on }\Gamma_0\},
\end{align*}
which is a closed subspace of the Sobolev space $H^m(\Omega)$. In the same manner, we define the space
\begin{align*}
\mathcal{V}^m =\{(u,u_\Gamma)\in H_{\Gamma_0}^m(\Omega)\times H^m(\Gamma_1)\,:\, u=u_\Gamma\text{ on }\Gamma_1\}.
\end{align*}
and by simplicity we write $\mathcal{V}=\mathcal{V}^1$. Due to the Poincar\'e inequality and a trace theorem,
we have
	\begin{align*}
		\int_\Omega |u|^2 dx + \int_{\Gamma_1} |u_\Gamma|^2 d\sigma \leq C \int_\Omega |\nabla u|^2 dx,
	\end{align*}
for all $u \in \mathcal{V}$, and then we deduce that $\mathcal{V}$ is a Hilbert space in $\C$ with the inner product given by
\begin{align*}
\langle (u,u_\Gamma),(v,v_\Gamma) \rangle_{\mathcal{V}}=\int_\Omega \nabla u \cdot \nabla \overline{v}\,dx + \int_{\Gamma_1} \nabla_\Gamma u_\Gamma \cdot \nabla_\Gamma \overline{v}_\Gamma\, d\sigma.
\end{align*}

Now, we present a definition related with the 
geometric hypothesis we will assume for  the interior boundary.
\begin{definition} 
	An open, bounded and convex set $U\subset \mathbb{R}^n$, is said to be {\bf strongly convex} if $\partial U$ is of class $C^2$ and all the principal curvatures are strictly positive functions on $\partial U$.
\end{definition}
We point out that $U\subset \mathbb{R}^n$ is strongly convex if and only if for all plane $\Pi \subset \mathbb{R}^n$ intersecting $U$, the curve $\Pi \cap \partial U$ has strictly  positive curvature at each point. In particular, a strongly convex set is geometrically strictly convex, 
in the sense that it has, at each of its boundary points, a supporting hyperplane with exactly one contact point.

We assume that
$\Omega = \Omega_0 \setminus \overline \Omega_1$, where $\Omega_1$
is   strongly convex.  Also, without loss of generality, we suppose that $0 \in \Omega_1$ (if it is not the case, we can 
take $x_0\in \Omega_1$ and then perform a translation by $-x_0$). Then, we set  $\Gamma_{k} = \partial \Omega_k$ for $k=0,1$.

Now we can define the Carleman weight function we will  use in this work. For each $x \in \R^n$, we set
\begin{equation} \label{Mink}
\mu(x) = \inf \{ \lambda > 0 \, : \, x \in \lambda \Omega_1\}.
\end{equation}

We define for each $x \in \Omega$, $\psi(x)=\mu^2(x)$, and for $\lambda>0$ we set 
\begin{align}
\label{def:weight:functions}
\theta(x,t)=\dfrac{e^{\lambda \psi(x)}}{t(T-t)},\quad \varphi(x,t)=\dfrac{\alpha -e^{\lambda \psi(x)}}{t(T-t)},\quad  \forall (x,t)\in {\Omega}\times (0,T), 
\end{align}
where $\alpha>\|e^{\lambda \psi}\|_{L^\infty(\Omega)}$.


In order to guarantee enough regularity of the  function $\mu$, we will assume that  $\partial \Omega_1$
has a regular parametrization. In order to be explicit, we
will ask the following property. 
\begin{equation} \label{regmu}
\text{ The bijection } {\Phi:}\,{x \in \partial \Omega_1}\mapsto {\frac{x}{\|x\|} \in S^{n-1}}
\text{ is a }  C^4 \text{ diffeomorphism.}
\end{equation}
We recall that  $\Phi$ is well-defined and is a bijective function thanks to the fact that $\Omega_1$ is convex and it contains the origin.

We point out that the weight functions defined in \eqref{def:weight:functions} have been previously used to deduce Carleman estimates for transmission problems for wave and Schr\"odinger equations  with Dirichlet boundary conditions, see \cite{BMO} and \cite{baudouin2008inverse}. However, to the author's knowledge it is the first time that this function is used to deduce controllability results for Schr\"odinger equation with dynamic boundary conditions.

\begin{remark} 
	The function $\mu$ defined in \eqref{Mink} is called the Minkowski functional of the set $\Omega_1$.
	By definition, given $x \in \Omega$, if $\lambda = \lambda(x) > 0$ is such that $ x \in \lambda \partial \Omega_1 = \lambda \Gamma_1$ (by convexity, there exists 
	exactly  only such $\lambda$), then 
	$\mu(x) = \lambda$.
	This function has the well-known property of, under adequate hypothesis on the open set $\Omega_1$,  defining 
	a norm in $\R^n$ such that their unit ball is $\Omega_1$. 
\end{remark}

\subsection{Main results}

In this section, we give the main results of this article.
The first result is a Carleman estimate for a Schr\"odinger equation with dynamic boundary conditions, whose proof is given in Section \ref{section:proof:carleman}.

\begin{theorem}
	\label{Thm:Carleman}
	Suppose that 
	$\Omega = \Omega_0 \setminus \overline \Omega_1$, where
	$\Omega_0$ is an open bounded set with $C^2$ boundary
	and $\Omega_1$ is an open strongly convex set 
	with boundary $\partial \Omega_1$ satisfying regularity hypothesis \eqref{regmu}.
	Let $(\vec{q}_1,\vec{q}_{\Gamma,1})\in [L^\infty(\Omega\times (0,T))]^n\times [L^\infty(\Gamma_1 \times (0,T))]^{n}$ and $(q_0,q_{\Gamma,0})\in L^\infty(\Omega\times (0,T))\times L^\infty(\Gamma_1\times (0,T))$. Also, assume that $\delta$ and $d$  are  positive constants satisfying
	\begin{align}
	\label{assumption:delta:d}
	\delta > d.
	\end{align}
	Then, there exist constants $C$, $s_0$ and $\lambda_0$ such that
	\begin{align}
	\label{Carleman:estimate:01}
	\begin{split}
	&\IOT{e^{-2s\varphi} \left(s^3\lambda^4\theta^3|v|^2 +s\lambda \theta |\nabla v|^2 +s\lambda^2 \theta |\nabla \psi \cdot \nabla v|^2 \right) }\\
	&+\IGT{e^{-2s\varphi} \left(s^3\lambda^3 \theta^3 |v_\Gamma|^2 + s\lambda \theta|\pnu v|^2 + s\lambda \theta |\nabla_\Gamma v_\Gamma|^2 \right)}\\
	\leq & C\IOT{e^{-2s\varphi}|L(v)|^2}
	+ C \IGT{e^{-2s\varphi} |N(v,v_\Gamma)|^2}\\
	&+C s\lambda \IGST{e^{-2s\varphi} \theta|\pnu v|^2},
	\end{split}
	\end{align}   
	for all 
	$ \lambda\geq \lambda_0$,  $ s\geq s_0$ and 
	$(v,v_\Gamma)\in L^2(0,T;\mathcal{V})$ where
	\begin{align*}
	L(v):=&i\pt v+d\Delta v + \vec{q}_1\cdot \nabla v +q_0v\in L^2(\Omega\times (0,T)),\\
	N(v,v_\Gamma):=&i\pt v -d\pnu v + \delta \Delta_\Gamma v_\Gamma+ \vec{q}_{\Gamma,1}\cdot \nabla_\Gamma u_\Gamma + q_{\Gamma,0} v_\Gamma\in L^2(\Gamma_1\times (0,T))
	\end{align*}
	and 
	$\pnu v \in L^2(\partial \Omega\times (0,T))$, with 
	\begin{align}
	\label{def:Gamma-ast} 
	\Gamma_* := \{x\in \partial \Omega\,:\, \pnu \psi(x)\geq 0\} \subseteq \Gamma_0.
	\end{align}
\end{theorem}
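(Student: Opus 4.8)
The plan is to prove the Carleman estimate \eqref{Carleman:estimate:01} by the classical conjugation-and-energy-estimate scheme, reducing the dynamic boundary problem to the standard Schr\"odinger Carleman machinery plus boundary terms that are controlled by the geometry of $\psi=\mu^2$. First I would set $w = e^{-s\varphi} v$ (and correspondingly $w_\Gamma = e^{-s\varphi}v_\Gamma$), compute the conjugated operator $P w := e^{-s\varphi} L(e^{s\varphi} w)$, and split it into its formally self-adjoint part $P_1 w$ and skew-adjoint part $P_2 w$ (the usual Fursikov--Imanuvilov decomposition, with the first-order terms $\vec q_1\cdot\nabla v$ and $q_0 v$ absorbed later as lower-order perturbations). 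Expanding $\|P_1 w + P_2 w\|_{L^2(\Omega\times(0,T))}^2 \geq 2\mathrm{Re}\,\langle P_1 w, P_2 w\rangle$ and integrating by parts in $x$ and $t$ produces the interior positive terms $s^3\lambda^4\theta^3|w|^2$, $s\lambda\theta|\nabla w|^2$, $s\lambda^2\theta|\nabla\psi\cdot\nabla w|^2$, together with a collection of boundary integrals over $\partial\Omega\times(0,T)$ involving $\pnu w$, $\nabla_\Gamma w_\Gamma$, $w_\Gamma$, and $\pnu\psi$. The crucial structural input here is that $\psi=\mu^2$ has positive-definite Hessian on $\Omega\setminus\{0\}$ after translating $0$ into $\Omega_1$ — this is exactly where strong convexity of $\Omega_1$ and the regularity \eqref{regmu} enter, guaranteeing the pseudoconvexity inequality $\nabla^2\psi(\xi,\xi)\geq c|\xi|^2$ needed for the positivity of the quadratic form; I would cite \cite{BMO, baudouin2008inverse} for the analogous computation and simply record the resulting identity.

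The genuinely new part is the treatment of the boundary. On $\Gamma_1\times(0,T)$ we cannot discard the boundary integrals (as one does for Dirichlet data); instead we use the second equation of the conjugated system. I would conjugate the boundary operator $N$ as well, writing $Q(w,w_\Gamma):=e^{-s\varphi}N(e^{s\varphi}w,e^{s\varphi}w_\Gamma)$, and derive a \emph{boundary} Carleman identity on $\Gamma_1$ for the tangential Schr\"odinger-type equation $i\pt w_\Gamma + \delta\Delta_\Gamma w_\Gamma - d\pnu w = \ldots$, using the surface divergence theorem and the tangential analogue of the Fursikov--Imanuvilov splitting. This yields the positive boundary terms $s^3\lambda^3\theta^3|w_\Gamma|^2$ and $s\lambda\theta|\nabla_\Gamma w_\Gamma|^2$, plus cross terms coupling $\pnu w$ on $\Gamma_1$ — the matching condition $v=v_\Gamma$ (hence $w=w_\Gamma$) on $\Gamma_1$ is what lets the bulk and boundary identities be added coherently. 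The assumption \eqref{assumption:delta:d}, $\delta>d$, is precisely what makes the combined quadratic form in the $\pnu w$-terms on $\Gamma_1$ nonnegative (so the $-d\pnu w$ coupling is dominated by the $\delta$-Laplace--Beltrami contribution), which is why it cannot be dropped; I would isolate that algebraic sign condition as the key lemma of this step.

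Next I would handle the boundary integrals on $\Gamma_0$. By definition \eqref{def:Gamma-ast}, $\Gamma_* = \{\pnu\psi\geq 0\}$, and on $\Gamma_0\setminus\Gamma_*$ one has $\pnu\psi<0$, so the corresponding $s\lambda\theta\,\pnu\psi\,|\pnu w|^2$ boundary term has the \emph{good} sign and can be thrown away; on $\Gamma_*$ it must be kept as an observation term, producing the last right-hand-side integral $s\lambda\int_0^T\!\int_{\Gamma_*}e^{-2s\varphi}\theta|\pnu v|^2$. (Geometrically, $\pnu\psi\geq 0$ on the outer component $\Gamma_0$ holds only where the outward normal of $\Omega$ points away from the origin; the point of the Minkowski weight is that $\pnu\psi=\pnu\mu^2$ is automatically $\leq 0$ on the inner boundary $\Gamma_1=\partial\Omega_1$ — indeed $\mu\equiv 1$ there and $\mu$ increases outward — so $\Gamma_1$ contributes no observation, which is what allows the control to act only on $\Gamma_0$.) Then I would absorb the lower-order terms: the potentials $\vec q_1\cdot\nabla v$, $q_0v$, $\vec q_{\Gamma,1}\cdot\nabla_\Gamma v_\Gamma$, $q_{\Gamma,0}v_\Gamma$ contribute terms bounded by $C\|q\|_\infty^2\big(\int e^{-2s\varphi}|\nabla w|^2 + \int e^{-2s\varphi}\theta^2|w|^2+\text{boundary}\big)$, which for $s,\lambda$ large are absorbed into the left-hand side thanks to the powers $s^3\lambda^4\theta^3$ and $s\lambda\theta$. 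Finally I would restore $v=e^{s\varphi}w$ (noting $|\nabla v|^2\leq C(|\nabla w|^2 + s^2\lambda^2\theta^2|w|^2)e^{2s\varphi}$ and similarly for the $\nabla\psi\cdot\nabla v$ term, the extra $s^2\lambda^2\theta^2|w|^2$ again absorbed), take $\lambda\geq\lambda_0$ and $s\geq s_0$, and collect the estimate.

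The main obstacle I anticipate is the boundary computation on $\Gamma_1$: keeping track of all cross terms between the bulk normal-derivative contributions and the tangential Laplace--Beltrami contributions, and verifying that under $\delta>d$ the full boundary quadratic form (in $\pnu w$, $\nabla_\Gamma w_\Gamma$, $w_\Gamma$, and their products with $\pt\varphi$, $\nabla_\Gamma\psi$, etc.) is coercive. A secondary technical point is ensuring $\psi=\mu^2\in C^4$ with the required pseudoconvexity and that $\nabla\psi\neq 0$ on $\overline\Omega$ — this is exactly guaranteed by strong convexity of $\Omega_1$, the hypothesis $0\in\Omega_1$, and \eqref{regmu}, and I would state the needed properties of $\mu$ (positive-definite Hessian away from $0$, $\nabla\mu\cdot\nu>0$ on $\Gamma_1$) as a preliminary lemma citing the convex-geometry facts, then use them as black boxes in the energy estimate.
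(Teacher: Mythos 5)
Your proposal is essentially the paper's own proof: conjugate with $e^{s\varphi}$, split the conjugated bulk and boundary operators into (skew-)self-adjoint parts plus lower-order remainders, expand the cross products $\Re\langle P_1w,P_2w\rangle$ and $\Re\langle Q_1w,Q_2w\rangle$, exploit the $C^4$ regularity, nonvanishing gradient and positive-definite Hessian of the Minkowski-functional weight $\psi=\mu^2$, add the bulk and $\Gamma_1$ identities via $w=w_\Gamma$, keep only the $\{\pnu\psi\geq 0\}$ portion of $\Gamma_0$ as observation, absorb the potentials for large $s,\lambda$, and return to $v$. One small correction to your description of the role of \eqref{assumption:delta:d}: the term on $\Gamma_1$ that needs $\delta>d$ is the \emph{tangential-gradient} one, since the bulk identity produces $d^2 s\lambda\int_0^T\!\int_{\Gamma_1}\pnu\psi\,\theta|\nabla_\Gamma w|^2$ with the wrong sign ($\pnu\psi<0$ there) while the boundary identity produces $-\delta d\, s\lambda\int_0^T\!\int_{\Gamma_1}\pnu\psi\,\theta|\nabla_\Gamma w|^2$, giving the coercive combination $d(\delta-d)s\lambda\int_0^T\!\int_{\Gamma_1}|\pnu\psi|\,\theta|\nabla_\Gamma w|^2$; the $|\pnu w|^2$ term on $\Gamma_1$ already carries a good sign from the bulk computation alone.
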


\begin{remark} 
	The hypothesis about  the convexity  of $\Omega_1$ allows us to define a weight function $\psi$ adapted to the geometry of our problem (see \eqref{Mink} and \eqref{def:weight:functions}): it can be used as a Carleman weight function, and has the 
	particularity that it is constant on $\Gamma_1$, which implies that $\nabla_\Gamma \psi = 0$ and $\pnu \psi < 0$ in $\Gamma_1$.  
\end{remark}
\begin{remark}
	
	Hypothesis \eqref{assumption:delta:d} is used in order to estimate the term with $\nabla _\Gamma v_\Gamma$ on $\Gamma_1\times (0,T)$ in the left-hand side of the Carleman estimate \eqref{Carleman:estimate:01} (see inequality \eqref{conclu:01}). 
\end{remark}
\begin{remark} 
	We recall that $\partial \Omega=\Gamma_0 \cup \Gamma_1$, where $\Gamma_0$ and $\Gamma_1$ are connected and closed. In Figure \ref{pic:geo}, we illustrate an example of the shape of $\Omega$ under the geometrical assumption of Theorem \ref{Thm:Carleman}.
\end{remark}

\begin{figure}[!h]
	\begin{center}
		\includegraphics[scale=0.5]{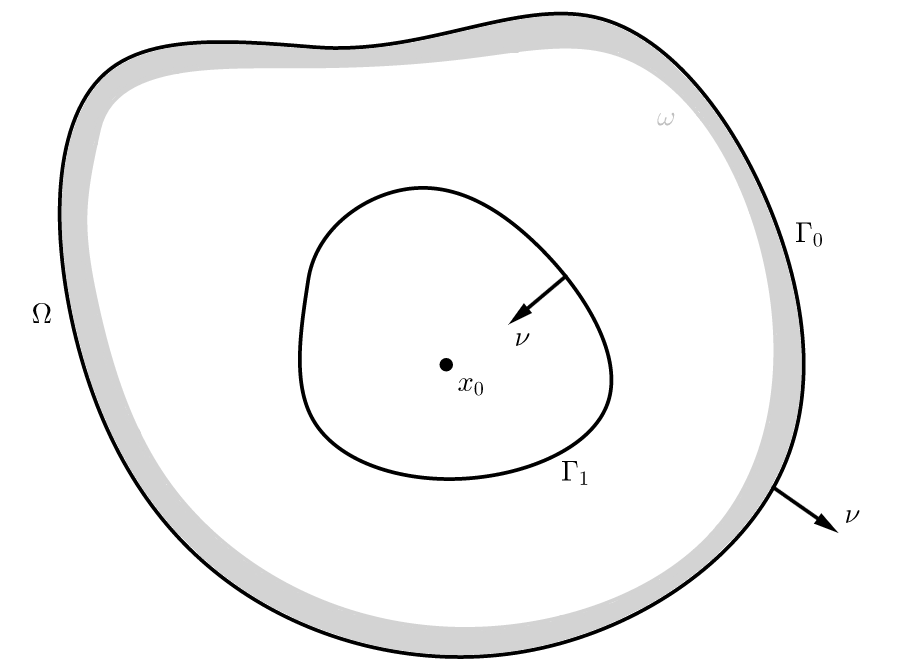}
		\caption{Geometric assumptions of Theorem \ref{Thm:Carleman}.}
		\label{pic:geo}
	\end{center}
\end{figure}
As a direct consequence of the Theorem \ref{Thm:Carleman}, we can obtain a Carleman estimate where the observation is taken in a boundary neighborhood of $\Gamma_*$ {(see for example the region $\omega$ in gray in Figure \ref{pic:geo})}.

\begin{corollary}
	\label{Corollary:Carleman}	
	Let $\omega\subset \Omega$ be an open set such that there exists $\varepsilon>0$ such that 
	\begin{align*}
	\omega \supset \{ x\in \Omega\,:\, \text{dist}(x,\Gamma_\ast)\leq \varepsilon\},
	\end{align*}
	where $\Gamma_*$ is given by \eqref{def:Gamma-ast}. Then, under the assumptions of Theorem \ref{Thm:Carleman}, there exist  positive constants $C,s_0,\lambda_0$ such that
	\begin{align}
	\label{Carleman:estimate:distributed}
	\begin{split}
	&\IOT{e^{-2s\varphi}(s^3\lambda^4 \theta^3 |v|^2 + s\lambda \theta |\nabla v|^2)}\\
	&+\IGT{e^{-2s\varphi}(s^3\lambda^3 \theta^3 |v|^2 +s\lambda \theta |\nabla_\Gamma v_\Gamma|^2 + s\lambda \theta |\pnu v|^2)}\\
	\leq & C\IOT{e^{-2s\varphi} |L(v)|^2}+C\IGT{e^{-2s\varphi} |N(v,v_\Gamma)|^2}\\
	&+C \int_0^T\int_\omega e^{-2s\varphi} \left( s^3\lambda^4 \theta^3 |v|^2 +s\lambda \theta |\nabla v|^2 \right) dx dt 
	\end{split}
	\end{align}	
	for all $ \lambda \geq \lambda_0$ and $ s\geq s_0$, and 
	for all pair $(v,v_\Gamma)\in L^2(0,T;\mathcal{V})$ satisfying $L(v)\in L^2(\Omega\times (0,T))$, $N(v,v_\Gamma)\in L^2(\Gamma_1\times (0,T))$
	and  $\pnu v \in L^2(\partial \Omega\times (0,T))$.
\end{corollary}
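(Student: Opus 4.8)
The plan is to deduce Corollary \ref{Corollary:Carleman} from Theorem \ref{Thm:Carleman} by absorbing the boundary observation term $\IGST{e^{-2s\varphi}\theta |\pnu v|^2}$ into a distributed integral over $\omega$. The mechanism is a cutoff-plus-interior-estimate argument: since $\omega$ contains an $\varepsilon$-neighborhood of $\Gamma_*$ inside $\Omega$, we can pick an open set $\omega_0$ with $\{x\in\Omega\,:\,\dist(x,\Gamma_*)\le\varepsilon/2\}\subset\omega_0\Subset\omega$, and a cutoff $\chi\in C^\infty_c(\overline\Omega)$ with $0\le\chi\le 1$, $\chi\equiv 1$ on a neighborhood of $\Gamma_*$ and $\supp\chi\subset\omega_0$. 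The point is to control $\IGST{e^{-2s\varphi}\theta|\pnu v|^2}$ using a multiplier (essentially $\chi e^{-2s\varphi}\theta\,\nabla v\cdot\nu$, or rather an extension of the vector field $\nu$ near $\Gamma_*$) and integrate the identity $\div$ of that field over $\Omega$.

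\medskip

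First I would introduce a smooth vector field $\vec{m}:\overline\Omega\to\R^n$ which coincides with $\nu$ on $\Gamma_*$, is supported in $\omega_0$, and — crucially — satisfies $\vec m\cdot\nu\le 0$ on $\Gamma_0\setminus\Gamma_*$ and $\vec m\cdot\nu = 0$ on $\Gamma_1$ (this is possible because $\Gamma_*$ is a relatively open and closed piece of $\Gamma_0$, at positive distance from $\Gamma_1$). Then, multiplying the equation $L(v)=f$ by $\bar\chi := s\lambda\theta e^{-2s\varphi}(\vec m\cdot\nabla\bar v)$ (with $\chi$ already folded into $\vec m$ or an extra cutoff), taking real parts and integrating over $\Omega\times(0,T)$, a standard integration by parts produces on $\Gamma_*$ exactly the term $c\,s\lambda\IGST{e^{-2s\varphi}\theta|\pnu v|^2}$ (using that $v=0$ on $\Gamma_0$ so $\nabla v = \nu\,\pnu v$ there), while all remaining terms are either (i) boundary integrals on $\Gamma_0\setminus\Gamma_*$ with a favorable sign (hence droppable) or on $\Gamma_1$ (which vanish because $\vec m\cdot\nu=0$), or (ii) volume integrals supported in $\supp\vec m\subset\omega$, of the form $C\int_0^T\int_\omega e^{-2s\varphi}(s^3\lambda^4\theta^3|v|^2 + s\lambda\theta|\nabla v|^2)\,dxdt + C\IOT{e^{-2s\varphi}|L(v)|^2}$, after using Cauchy–Schwarz and absorbing the $|\pt v|$ contribution by integrating the $\pt$-term back by parts in $t$ (no boundary terms in $t$ since $\theta e^{-2s\varphi}$ vanishes at $t=0,T$). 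The potentials $\vec q_1\cdot\nabla v + q_0 v$ contribute only lower-order terms bounded by the same $\omega$-integral for $s,\lambda$ large.

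\medskip

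With the inequality $s\lambda\IGST{e^{-2s\varphi}\theta|\pnu v|^2}\le C\IOT{e^{-2s\varphi}|L(v)|^2} + C\int_0^T\int_\omega e^{-2s\varphi}(s^3\lambda^4\theta^3|v|^2 + s\lambda\theta|\nabla v|^2)\,dxdt$ in hand, I substitute it into the right-hand side of \eqref{Carleman:estimate:01}. Since the left-hand side of \eqref{Carleman:estimate:distributed} is majorized term-by-term by the left-hand side of \eqref{Carleman:estimate:01} (indeed it contains a strict subset of the positive terms, dropping $|\nabla\psi\cdot\nabla v|^2$), the desired estimate follows immediately, with new constants $C,s_0,\lambda_0$.

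\medskip

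The main obstacle is the construction of the multiplier vector field and the careful sign bookkeeping of the boundary terms generated by the integration by parts: one must ensure that the contribution on $\Gamma_1$ genuinely vanishes (this uses $\vec m\cdot\nu\equiv 0$ there, together with $v=v_\Gamma$ and the absence of any $\pnu v$-cross-terms once $\vec m$ is tangent to $\Gamma_1$), that the $\Gamma_0\setminus\Gamma_*$ terms have the right sign so they can be thrown away, and that all volume remainders are supported in $\omega$ and are of lower order than the dominant $s^3\lambda^4\theta^3$ and $s\lambda\theta$ weights so they can be absorbed for $s\ge s_0$, $\lambda\ge\lambda_0$. The time-derivative term requires a separate integration by parts in $t$; since the weight $s\lambda\theta e^{-2s\varphi}$ and its $t$-derivative both vanish at the endpoints $t=0,T$ and are dominated by $s^2\lambda^2\theta^2 e^{-2s\varphi}$ near the endpoints, this produces only terms already present on the left-hand side times a small factor, hence absorbable. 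None of these steps is deep, but the geometry near the interface $\Gamma_0/\Gamma_1$ must be handled with care.
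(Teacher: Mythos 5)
Your strategy is genuinely different from the paper's, and as written it does not close. The paper disposes of the boundary observation by a much simpler device: take $\eta\in C^\infty(\overline\Omega)$ with $\eta=1$ on $\overline\Omega\setminus\omega$ and $\eta=0$ near $\Gamma_*$, apply Theorem \ref{Thm:Carleman} to $(\eta v,\eta v_\Gamma)$ — then $\pnu(\eta v)=0$ on $\Gamma_*$, so the boundary observation term vanishes identically, and the only price is the commutator $2d\nabla\eta\cdot\nabla v+\Delta\eta\, v$, which is supported in $\omega$, carries no powers of $s,\lambda,\theta$, and is therefore trivially dominated by the interior observation for $s,\lambda$ large. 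No multiplier identity, no sign bookkeeping on the boundary, no time integration by parts.

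The gap in your route is quantitative and lies in the weights. Your key lemma is
\begin{align*}
s\lambda \IGST{e^{-2s\varphi}\theta|\pnu v|^2} \leq C\IOT{e^{-2s\varphi}|L(v)|^2} + C\int_0^T\!\!\int_\omega e^{-2s\varphi}\left(s^3\lambda^4\theta^3|v|^2+s\lambda\theta|\nabla v|^2\right)dxdt,
\end{align*}
to be proved by the multiplier $\rho\,(\vec m\cdot\nabla\bar v)$ with $\rho=s\lambda\theta e^{-2s\varphi}$. Two families of remainder terms are not absorbable. First, the Rellich rearrangement of $d\Re\int\Delta v\,\overline{(\vec m\cdot\nabla v)}\rho$ produces interior terms carrying $\nabla\rho$; since $\nabla\rho = s\lambda^2\theta\nabla\psi\, e^{-2s\varphi} - 2s\lambda\theta\, s\nabla\varphi\, e^{-2s\varphi}\sim s^2\lambda^2\theta^2 e^{-2s\varphi}$, you get $\int_0^T\int_\omega s^2\lambda^2\theta^2 e^{-2s\varphi}|\nabla v|^2$, which exceeds the allowed $\int_0^T\int_\omega s\lambda\theta e^{-2s\varphi}|\nabla v|^2$ by the unbounded factor $s\lambda\theta$. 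Second, your claim that $\pt\rho$ is dominated by $s^2\lambda^2\theta^2e^{-2s\varphi}$ is false: since $|\pt\theta|\leq C\theta^2$ and $|\pt\varphi|\leq C\theta^2$, one has $|\pt\rho|\sim s^2\lambda\theta^3e^{-2s\varphi}$, and the resulting cross term $\int_0^T\int_\omega s^2\lambda\theta^3e^{-2s\varphi}|v||\nabla v|$ cannot be split into $s^3\lambda^4\theta^3|v|^2+s\lambda\theta|\nabla v|^2$ — any Young splitting leaves a factor $\theta^2$ uncontrolled as $t\to0,T$. (There is also a sign slip: to keep $\int_{\Gamma_*}|\pnu v|^2$ after dropping the rest of $\Gamma_0$ you need $\vec m\cdot\nu\geq0$ on $\Gamma_0\setminus\Gamma_*$, not $\leq0$; this is the harmless one, since continuity from $\vec m=\nu$ on $\Gamma_*$ forces the non-negative sign anyway.) The final reduction — substituting the key lemma into \eqref{Carleman:estimate:01} and dropping the $|\nabla\psi\cdot\nabla v|^2$ term — is fine, but the lemma itself is not established by the proposed computation. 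I recommend replacing the multiplier argument by the cutoff argument above.
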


With Theorem \ref{Thm:Carleman} and Corollary \ref{Corollary:Carleman} at hand, we derive exact controllability results for Schr\"odinger equations with dynamic boundary conditions. To this end, we suppose that
\begin{align}
\label{hip:01} 
\vec{q}_1=d\nabla \pi-i\vec{r}, \quad \vec{q}_{\Gamma,1}=\delta\nabla_\Gamma \pi_\Gamma-i\vec{r}_\Gamma,
\end{align}
where 
$$\pi\in L^{\infty}(0,T;W^{3,\infty}(\Omega;\mathbb{R}))
\cap  W^{1,\infty}(0,T;W^{1,\infty}(\Omega;\mathbb{R})),$$
 $$\pi_\Gamma \in L^\infty(0,T;W^{3,\infty}(\Gamma_1;\mathbb{R})) \cap  W^{1,\infty}(0,T;W^{1,\infty}(\Gamma_1;\mathbb{R}))$$ 
 with $\pi=\pi_\Gamma$ on $\Gamma_1\times (0,T)$, and 
 $$\vec{r}\in [L^\infty(0,T;W^{2,\infty}(\Omega;\mathbb{R}))]^n, \quad \vec{r}_\Gamma \in [L^\infty(0,T;W^{2,\infty}(\Gamma_1; \mathbb{R}))]^n$$ 
 with $\vec{r}=\vec{r}_\Gamma$ on $\Gamma_1\times (0,T)$, $\vec{r}\cdot \nu \leq 0$ on $\partial \Omega\times (0,T)$. We also consider that 
\begin{align}
\label{hip:02}
(q_0,q_{\Gamma,0})\in L^\infty(0,T;W^{1,\infty}(\Omega)\times W^{1,\infty}(\Gamma_1)).
\end{align}
We point out that the choice of $(\vec{q}_1,\vec{q}_{\Gamma,1})$, $(q_0,q_{\Gamma,0})$ and its assumptions comes from the wellposedness of system  \eqref{intro:problem:01} and also of its adjoint, which requires more space regularity. For more details about this subject, see Section \ref{section:existence:uniqueness}, problem \eqref{adjoint:z} and \eqref{def:q:qg:adjoint}.

Then, thanks to Theorem \ref{Thm:Carleman} we obtain

\begin{theorem}
	\label{Thm:Controllability}
	Assume the hypotheses of Theorem \ref{Thm:Carleman}. In addition, assume \eqref{hip:01} and \eqref{hip:02}.	
	Then, for all initial states $(y_0,y_{\Gamma,0}), (y_T,y_{\Gamma,T})\in \mathcal{V}'$, there exists a control $h\in L^2(\RM{\Gamma_*}\times (0,T))$ such that the associated solution 
	$(y, y_\Gamma)$ of \eqref{intro:problem:01} (defined in the sense of transposition) satisfies 
	\begin{align*}
	(y(T),y_{\Gamma}(T))=(y_T,y_{\Gamma,T})\text{ in }\Omega\times \Gamma_1.
	\end{align*} 
\end{theorem}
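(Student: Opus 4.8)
The plan is to derive Theorem \ref{Thm:Controllability} from the Carleman estimate of Theorem \ref{Thm:Carleman} via the Hilbert Uniqueness Method (HUM), reducing exact controllability of \eqref{intro:problem:01} to an observability inequality for the adjoint system. First I would write down the adjoint system associated to \eqref{intro:problem:01}: a backward Schr\"odinger-Wentzell system of the form
\begin{align*}
\begin{cases}
i\pt z + d\Delta z + \vec{q}_1^{\,*}\cdot\nabla z + q_0^* z = 0, & \text{ in }\Omega\times(0,T),\\
i\pt z_\Gamma - d\pnu z + \delta\Delta_\Gamma z_\Gamma + \vec{q}_{\Gamma,1}^{\,*}\cdot\nabla_\Gamma z_\Gamma + q_{\Gamma,0}^* z_\Gamma = 0, & \text{ on }\Gamma_1\times(0,T),\\
z = z_\Gamma, & \text{ on }\Gamma_1\times(0,T),\\
z = 0, & \text{ on }\Gamma_0\times(0,T),\\
(z(T),z_\Gamma(T)) = (z_T,z_{\Gamma,T}), & \text{ in }\Omega\times\Gamma_1,
\end{cases}
\end{align*}
obtained by integration by parts, where the starred potentials are the formal adjoints of the original ones; the regularity assumptions \eqref{hip:01}--\eqref{hip:02} and the structural condition $\vec{q}_1 = d\nabla\pi - i\vec{r}$ etc. are precisely what make this adjoint problem well posed with $L^2(0,T;\mathcal V)$ regularity and with $\pnu z\in L^2(\partial\Omega\times(0,T))$ (the hidden regularity / trace estimate), which I would invoke from Section \ref{section:existence:uniqueness}. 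Since $z=0$ on $\Gamma_0$, the term $N(z,z_\Gamma)$ in \eqref{Carleman:estimate:01} reduces to a combination that vanishes along the adjoint dynamics, and $L(z)=0$ as well; hence the right-hand side of \eqref{Carleman:estimate:01} collapses to the single boundary observation term $C s\lambda\IGST{e^{-2s\varphi}\theta|\pnu z|^2}$.

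The second step is to convert this weighted Carleman inequality into a clean observability inequality. Fix $s$ and $\lambda$ at admissible values $s_0,\lambda_0$; the weights $e^{-2s\varphi}\theta^3$, $e^{-2s\varphi}\theta$, etc., are then bounded below by a positive constant on any compact subinterval $[\eta,T-\eta]\subset(0,T)$, while $e^{-2s\varphi}\theta$ is bounded above on all of $\Gamma_*\times(0,T)$. Using the left-hand side of \eqref{Carleman:estimate:01} restricted to such a subinterval together with an energy estimate for the adjoint system — the conservative part of the Schr\"odinger-Wentzell operator preserves the $\mathcal H$-norm, and the lower-order potentials are handled by Gr\"onwall, giving $\|(z(t),z_\Gamma(t))\|_{\mathcal V'}\le C\|(z(s),z_\Gamma(s))\|_{\mathcal V'}$ for all $s,t$, plus a lift to the $\mathcal H$ level — I would obtain
\begin{align*}
\|(z(0),z_\Gamma(0))\|_{\mathcal H}^2 + \|(z_T,z_{\Gamma,T})\|_{\mathcal H}^2 \leq C\int_0^T\int_{\Gamma_*} |\pnu z|^2\, d\sigma\, dt.
\end{align*}
A small technical point is the regularity mismatch: to run the Carleman estimate one wants $(z,z_\Gamma)$ somewhat smooth, so I would first prove observability for strong solutions (data in $\mathcal V$ or better) and then extend to data in $\mathcal H$, hence controllability with data in the dual $\mathcal V'$, by density and duality; the Carleman estimate itself is already stated for $(v,v_\Gamma)\in L^2(0,T;\mathcal V)$ with the appropriate sources in $L^2$, which covers what is needed after a regularization argument.

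The third step is the HUM construction: define on $\mathcal H$ (or on the data space for the adjoint) the bilinear form
\begin{align*}
\Lambda(z_T,z_{\Gamma,T}) := \text{the map sending adjoint data to } \left(\text{solution of \eqref{intro:problem:01} with } h = \pnu z\big|_{\Gamma_*}, \text{ zero initial data, evaluated at } T\right),
\end{align*}
more precisely the quadratic form $\big\langle \Lambda(z_T,z_{\Gamma,T}),(z_T,z_{\Gamma,T})\big\rangle = \int_0^T\int_{\Gamma_*}|\pnu z|^2\,d\sigma\,dt$; the observability inequality shows this form is coercive on the completion of smooth data in the norm it induces, which by observability is (equivalent to) the $\mathcal V'$-norm, so Lax-Milgram yields, for any target, a unique adjoint datum whose normal trace $h=\pnu z|_{\Gamma_*}\in L^2(\Gamma_*\times(0,T))$ drives the zero state to that target. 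Translating the target by the free evolution of $(y_0,y_{\Gamma,0})$ handles arbitrary initial data, giving exact controllability in $\mathcal V'$ as claimed. The solution produced is the transposition (very weak) solution, consistent with the statement.

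The main obstacle I anticipate is not the HUM machinery, which is by now standard, but the interface between the Carleman estimate and the adjoint dynamics: one must check carefully that the adjoint of \eqref{intro:problem:01} has exactly the operator structure $L(\cdot), N(\cdot,\cdot)$ appearing in Theorem \ref{Thm:Carleman} (in particular that the sign conventions on $d\pnu$ and $\delta\Delta_\Gamma$ match, which is why condition $\vec{r}\cdot\nu\le 0$ on $\partial\Omega$ and the gradient-structure hypothesis \eqref{hip:01} are imposed — they ensure the transformed potentials are admissible $L^\infty$ coefficients of the right form after the change of unknown $z\mapsto e^{i\pi}z$), and that the hidden trace regularity $\pnu z\in L^2(\partial\Omega\times(0,T))$ genuinely holds for the adjoint at the regularity level of the data we use, since without it the observation term is not even well defined. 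The second delicate point is the passage from the strongly-weighted left-hand side of \eqref{Carleman:estimate:01} to an unweighted energy at $t=0$ and $t=T$, which requires combining the Carleman inequality on $[\eta,T-\eta]$ with the backward/forward energy estimates for the Wentzell system; this is routine in principle but must be done with the dynamic boundary term $\|z_\Gamma\|_{L^2(\Gamma_1)}^2$ carried along throughout.
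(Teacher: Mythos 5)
Your overall strategy---pass to the adjoint system, extract an observability inequality from the Carleman estimate of Theorem \ref{Thm:Carleman}, and conclude by duality---is exactly the route the paper takes (the duality step is already packaged in Proposition \ref{prop:exact:controllability}, and the weights are removed via a temporal cut-off supported away from $t=0$ combined with the $\mathcal{V}$-energy estimate of Proposition \ref{WP:prop:02}). There is, however, a genuine gap in your second step: you state the observability inequality at the $\mathcal{H}$ level, $\|(z_T,z_{\Gamma,T})\|_{\mathcal{H}}^2\leq C\int_0^T\int_{\Gamma_*}|\pnu z|^2\,d\sigma dt$, whereas exact controllability with targets ranging over all of $\mathcal{V}'$ requires, by the very duality you invoke (see \eqref{ObsIneq}), the inequality in the $\mathcal{V}$-norm, $\|(z_T,z_{\Gamma,T})\|_{\mathcal{V}}^2\leq C\int_0^T\int_{\Gamma_*}|\pnu z|^2\,d\sigma dt$. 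Since $\mathcal{V}\hookrightarrow\mathcal{H}$, your inequality is strictly weaker and would only yield reachability of targets lying in $\mathcal{H}$, not in $\mathcal{V}'$. Consequently your assertion that the HUM norm induced by $\int|\pnu z|^2$ is ``equivalent to the $\mathcal{V}'$-norm'' does not follow from what you proved: that equivalence is exactly the conjunction of the hidden regularity bound $\|\pnu z\|_{L^2}\leq C\|(z_T,z_{\Gamma,T})\|_{\mathcal{V}}$ (upper bound) with the $\mathcal{V}$-level observability (lower bound).

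The gap is fixable with the tools you already list, and this is precisely what the paper does: the left-hand side of \eqref{Carleman:estimate:01} controls the weighted gradient terms $s\lambda\theta|\nabla z|^2$ and $s\lambda\theta|\nabla_\Gamma z_\Gamma|^2$, so restricting to $[T/4,3T/4]$, where the weights are bounded below, yields control of $\|(z,z_\Gamma)\|_{L^2(T/4,3T/4;\mathcal{V})}$; applying Proposition \ref{WP:prop:02} to $(\zeta z,\zeta z_\Gamma)$ with source $i\zeta' (z, z_\Gamma)$ then propagates this to $\|(z_T,z_{\Gamma,T})\|_{\mathcal{V}}$. In other words, the energy propagation must be carried out at the $\mathcal{V}$ level throughout, not at the $\mathcal{H}$ level with a subsequent ``lift.'' A smaller inaccuracy: the reduction of the adjoint problem to the structure covered by Theorem \ref{Thm:Carleman} and by the well-posedness/hidden-regularity results of Section \ref{section:existence:uniqueness} uses the real gauge transform $\tilde y=e^{-\pi/2}y$ (with $\pi$ real-valued), not $z\mapsto e^{i\pi}z$; the resulting adjoint potentials are $q=i\,\text{div}(\vec r)+\overline{q}_0$ and $q_\Gamma=i\,\text{div}_\Gamma(\vec r_\Gamma)-i\vec r\cdot\nu+\overline{q}_{\Gamma,0}$, and the sign condition $\vec r\cdot\nu\leq 0$ is what makes Proposition \ref{WP:prop:02} applicable to them.
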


On the other hand, as a consequence of Corollary \ref{Corollary:Carleman} we can obtain a controllability result where a (distributed) control is acting on a subdomain of $\Omega$. In order to formulate this result, let $(y,y_\Gamma)$ be a solution of

\begin{equation}
\label{problem:control:distributed}
\begin{cases}
i\pt y + d\Delta y -\vec{q}_1 \cdot \nabla y +q_0y=\mathbbm{1}_\omega h,&\text{ in }\Omega\times (0,T),\\
i\pt y_\Gamma -d\pnu y +\delta \Delta_\Gamma y_\Gamma -\vec{q}_{\Gamma,1}\cdot \nabla_\Gamma y_\Gamma +q_{\Gamma,0}y_\Gamma =0,&\text{ on }\Gamma_1\times (0,T),\\
y=y_\Gamma,& \text{ on }\Gamma_1\times (0,T),\\
y=0,&\text{ on }\Gamma_0\times (0,T),\\
(y,y_\Gamma)(0)=(y_0,y_{\Gamma,0}),&\text{ in }\Omega\times \Gamma_1,
\end{cases}
\end{equation}

with $h\in L^2(\omega\times (0,T))$, with $\omega \subset \Omega$. Then, we have the following result:
\begin{theorem}\label{thm:controllability:distributed}
	Assume the  hypotheses of Corollary \ref{Corollary:Carleman}. Besides, we consider \eqref{hip:01} and \eqref{hip:02}.
	Then, for all $T>0$ and $(y_0,y_{\Gamma,0}),(y_T,y_{\Gamma,T})\in \mathcal{V}'$, there exists a control $h\in L^2(0,T;(H^1_{\Gamma_0}(\omega))')$ 
	such that the associated solution 
	$(y, y_\Gamma)$ of 
	\eqref{problem:control:distributed} 
	(defined in the sense of transposition) satisfies
	\begin{align*}
	(y(T),y_\Gamma(T))=(y_T,y_{\Gamma,T}),\text{ in }\Omega\times \Gamma_1.
	\end{align*}
\end{theorem}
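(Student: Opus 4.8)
\medskip
\noindent\textbf{Proof proposal.}
The plan is the standard duality/HUM scheme: reduce the control problem \eqref{problem:control:distributed} to an observability inequality for the homogeneous adjoint system, and obtain the latter from Corollary~\ref{Corollary:Carleman}. The adjoint system of \eqref{problem:control:distributed} with respect to the $\mathcal{H}$--pairing is the backward Schr\"odinger problem
\[
\begin{cases}
i\pt z+d\Delta z+\widetilde{\vec{q}}_1\cdot\nabla z+\widetilde q_0 z=0,&\text{in }\Omega\times(0,T),\\
i\pt z_\Gamma-d\pnu z+\delta\Delta_\Gamma z_\Gamma+\widetilde{\vec{q}}_{\Gamma,1}\cdot\nabla_\Gamma z_\Gamma+\widetilde q_{\Gamma,0}z_\Gamma=0,&\text{on }\Gamma_1\times(0,T),\\
z=z_\Gamma,&\text{on }\Gamma_1\times(0,T),\\
z=0,&\text{on }\Gamma_0\times(0,T),\\
(z,z_\Gamma)(T)=(z_T,z_{\Gamma,T}),&\text{in }\Omega\times\Gamma_1,
\end{cases}
\]
whose lower order coefficients $\widetilde{\vec{q}}_1,\widetilde q_0,\widetilde{\vec{q}}_{\Gamma,1},\widetilde q_{\Gamma,0}$ are still bounded under \eqref{hip:01}--\eqref{hip:02} (the additional regularity and the sign $\vec{r}\cdot\nu\le0$ imposed there are precisely what is needed to well pose both \eqref{problem:control:distributed} and this adjoint problem, and to produce the hidden regularity $\pnu z\in L^2(\partial\Omega\times(0,T))$; see Section~\ref{section:existence:uniqueness}). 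For data $(z_T,z_{\Gamma,T})\in\mathcal{V}$ the adjoint problem has a unique solution $(z,z_\Gamma)\in C([0,T];\mathcal{V})$, while \eqref{problem:control:distributed} with data in $\mathcal{V}'$ is solved by transposition against such adjoint states.

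\emph{From the Carleman estimate to observability.} Apply Corollary~\ref{Corollary:Carleman} with $v=z$ a solution of the adjoint system. The first two terms on the right--hand side of \eqref{Carleman:estimate:distributed} vanish, leaving only the integral over $\omega\times(0,T)$. Freezing $s=s_0$ and $\lambda=\lambda_0$, the weights $e^{-2s\varphi}\theta^3$ and $e^{-2s\varphi}\theta$ are bounded on $\omega\times(0,T)$, so that right--hand side is $\le C\int_0^T\|z(t)|_\omega\|_{H^1_{\Gamma_0}(\omega)}^2\,dt$; while restricting the left--hand side to $\Omega\times(T/4,3T/4)$, where $e^{-2s\varphi}\theta^3\ge c>0$ and where $\psi$ is constant on $\Gamma_1$, bounds it below by $c\int_{T/4}^{3T/4}\|(z(t),z_\Gamma(t))\|_{\mathcal{V}}^2\,dt$. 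Finally the energy estimates of Section~\ref{section:existence:uniqueness} (time reversibility of the Schr\"odinger dynamics, removal of the gradient part of \eqref{hip:01} by a gauge transformation $z\mapsto e^{i\pi}z$, and the good sign of $\vec{r}\cdot\nu$) give $\|(z(t),z_\Gamma(t))\|_{\mathcal{V}}\le e^{C|t-\tau|}\|(z(\tau),z_\Gamma(\tau))\|_{\mathcal{V}}$ for all $t,\tau\in[0,T]$; taking $t=T$ and averaging over $\tau\in(T/4,3T/4)$ yields
\[
\|(z_T,z_{\Gamma,T})\|_{\mathcal{V}}^2\le C\int_0^T\|z(t)|_\omega\|_{H^1_{\Gamma_0}(\omega)}^2\,dt .
\]

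\emph{HUM and conclusion.} On $\mathcal{V}$ define
\[
J(z_T,z_{\Gamma,T})=\tfrac12\int_0^T\|z(t)|_\omega\|_{H^1_{\Gamma_0}(\omega)}^2\,dt+\langle(y_0,y_{\Gamma,0}),(z(0),z_\Gamma(0))\rangle_{\mathcal{V}',\mathcal{V}}-\langle(y_T,y_{\Gamma,T}),(z_T,z_{\Gamma,T})\rangle_{\mathcal{V}',\mathcal{V}},
\]
where $(z,z_\Gamma)$ is the adjoint state generated by $(z_T,z_{\Gamma,T})$. Then $J$ is continuous, strictly convex and, by the observability inequality together with the continuous dependence $\|(z(0),z_\Gamma(0))\|_{\mathcal{V}}\le C\|(z_T,z_{\Gamma,T})\|_{\mathcal{V}}$, coercive; hence $J$ has a unique minimizer $(\widehat z_T,\widehat z_{\Gamma,T})$, with adjoint state $\widehat z$. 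Set $h:=\mathcal{R}(\widehat z|_\omega)\in L^2(0,T;(H^1_{\Gamma_0}(\omega))')$, where $\mathcal{R}\colon H^1_{\Gamma_0}(\omega)\to (H^1_{\Gamma_0}(\omega))'$ is the Riesz isomorphism. Testing the Euler--Lagrange identity $J'(\widehat z_T,\widehat z_{\Gamma,T})=0$ against an arbitrary $(z_T,z_{\Gamma,T})\in\mathcal{V}$ gives an equality that coincides, term by term, with the transposition identity defining the solution $(y,y_\Gamma)$ of \eqref{problem:control:distributed} driven by this $h$; comparing the two yields $(y(T),y_\Gamma(T))=(y_T,y_{\Gamma,T})$, as desired.

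\emph{Main obstacle.} Since Corollary~\ref{Corollary:Carleman} has already absorbed the lower order terms, the genuine work is concentrated in two bookkeeping steps: propagating the $\mathcal{V}$--energy (not merely the $\mathcal{H}$--energy) of the adjoint system in order to pass from the time--localized Carleman lower bound to a bound on $\|(z_T,z_{\Gamma,T})\|_{\mathcal{V}}$ -- this is where the extra regularity required in \eqref{hip:01}--\eqref{hip:02} and the sign condition $\vec{r}\cdot\nu\le0$ are essential -- and checking that the observation space $L^2(0,T;H^1_{\Gamma_0}(\omega))$ is exactly dual to the control space $L^2(0,T;(H^1_{\Gamma_0}(\omega))')$, so that the transposition solution and the Euler--Lagrange identity actually match. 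Both rely on the well--posedness theory of Section~\ref{section:existence:uniqueness}.
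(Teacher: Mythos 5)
Your proposal is correct and follows essentially the route the paper intends: the paper omits this proof, stating only that it ``follows the same spirit'' of Theorem \ref{Thm:Controllability}, and your argument is a faithful instantiation of that scheme (adjoint system, Corollary \ref{Corollary:Carleman} plus the $\mathcal{V}$-energy estimate of Proposition \ref{WP:prop:02} to get observability in $L^2(0,T;H^1_{\Gamma_0}(\omega))$, then duality/HUM). The only cosmetic differences are that the paper packages the energy-propagation step via a time cut-off $\zeta$ rather than a pointwise two-sided bound (of which you only need the forward direction, which is what Proposition \ref{WP:prop:02} actually gives), and the gauge change removing the gradient part of \eqref{hip:01} is $e^{-\pi/2}$, not $e^{i\pi}$ --- neither affects the validity of your argument.
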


The proof of Theorem \ref{thm:controllability:distributed} follows the same spirit of the proof of Theorem \ref{Thm:Controllability}. For this reason, in this paper we omit the proof of this result.

\begin{remark}
	We point out that some controllability results for conservative Schr\"odinger equation are obtained from the exact controllability for the wave equation, see for instance \cite{lebeau1992controle}, \cite{ervedoza2008observability} and generalized in \cite{miller2005controllability} for conservative systems by using a transmutation control technique. In case of \eqref{intro:problem:01} with 
	$(\vec{q}_1,\vec{q}_{\Gamma,1})=(0,0)$ and $(q_0,q_{\Gamma,0})=(0,0)$, a result can be obtained by applying these arguments 
	but with an additional control acting on a subset of $\Gamma_1$.
\end{remark}
The rest of the paper is organized as follows. In Section \ref{section:existence:uniqueness} we prove some results concerning existence and uniqueness of systems like \eqref{intro:problem:01} and \eqref{problem:control:distributed}. In Section \ref{section:proof:carleman} we prove the Carleman estimate obtained in Theorem \ref{Thm:Carleman}  and in Section \ref{section:proof:corollary:carleman} we prove the Corollary \ref{Corollary:Carleman} by using a suitable cut-off function. Finally, in Section \ref{section:proof:thm:controllability:boundary} we prove the Theorem \ref{Thm:Controllability}, which is equivalent to prove the so-called observability inequality associated to the adjoint system. 
\section{Existence and uniqueness of solutions}
\label{section:existence:uniqueness}

In this section, we provide existence and uniqueness results of solutions for Schr\"odinger equation with dynamic boundary conditions. We also give a hidden regularity property and define solutions in the sense of transposition for such systems.

Without loss of generality, it is sufficient to analyze the problem  \eqref{intro:problem:01} considering \eqref{hip:01} with $(\pi, \pi_\Gamma) = (0,0)$. 
That is to say, from now on, we will consider 
\begin{align}
	\label{simplified:problem:01}
	\begin{cases}
		i\pt y+d\Delta y +i\vec{r}\cdot \nabla y+q_0 y=0,&\text{ in }\Omega \times (0,T),\\
		i\pt y_\Gamma -d\pnu y+\delta \Delta_\Gamma y_\Gamma +i\vec{r}_\Gamma \cdot \nabla_\Gamma y_\Gamma + q_{\Gamma ,0}y_\Gamma =0,&\text{ on }\Gamma_1 \times (0,T),\\
		y=y_\Gamma,&\text{ on }\Gamma_1\times (0,T),\\
		y=\mathbbm{1}_{\Gamma_*}h,&\text{ on }\Gamma_0\times (0,T),\\
		(y(0),y_\Gamma (0))=(y_0,y_{\Gamma,0}),&\text{ in }\Omega\times \Gamma_1,
	\end{cases}
\end{align} 
with $\vec r$ and $\vec r_\Gamma$ real valued. 
Otherwise, we apply the change of variables $\tilde y(x,t)=e^{-\pi(x,t)/2}{y}(x,t)$ in $\Omega\times (0,T)$ and $\tilde y_\Gamma (x,t)=e^{-\pi(x,t)/2} {y}_\Gamma(x,t)$ on $\Gamma_1\times (0,T)$, where $(y,y_\Gamma)$ is a solution of \eqref{intro:problem:01}. Then, arguing as \cite[Appendix A]{Lasiecka01}, the new variable $(\tilde{y},\tilde{y}_\Gamma)$ satisfies a problem of the form \eqref{simplified:problem:01}. 

\subsection{Existence and regularity of solutions}
Given $d,\delta>0$, we consider the problem 
\begin{align}
\label{WP:problem:01}
	\begin{cases}
		\pt u-di\Delta u +\vec{\rho}_1\cdot \nabla u +\rho_0 u=g,&\text{ in } \Omega\times (0,T),\\
		\pt u +di\pnu u-\delta i\Delta_\Gamma u_\Gamma +\vec{\rho}_{\Gamma,1}\cdot \nabla_\Gamma u_\Gamma +\rho_{\Gamma,0}u_\Gamma=g_\Gamma,&\text{ on }\Gamma_1\times (0,T),\\
		u=u_\Gamma,&\text{ on }\Gamma_1\times (0,T),\\
		u=0,&\text{ on }\Gamma_0 \times (0,t),\\
		(u(0),u_\Gamma (0))=(u_0,u_{\Gamma,0}),&\text{ in }\Omega\times \Gamma_1,
	\end{cases}
\end{align}
where $(\vec{\rho}_1,\vec{\rho}_{\Gamma,1})$ is real valued and $(\rho_0,\rho_{\Gamma,0}), (g,g_\Gamma)$ are complex-valued. The next results establish $L^2$ and $H^1$ estimates of the weak solutions of the problem \eqref{WP:problem:01}, respectively. The proofs, as usual,  are based on energy estimates and 
density arguments. 
\begin{proposition}
\label{prop:WP:01}
	Suppose that $(u_0,u_{\Gamma,0})\in \mathcal{H}$, 
	$\vec{\rho}_1\in [L^\infty(0,T;W^{1,\infty}(\Omega;\mathbb{R}))]^n$. Moreover, assume that $\vec{\rho}_{\Gamma,1}\in 
	[L^\infty(0,T;W^{1,\infty}(\Gamma_1;\mathbb{R}))]^n$, with $\vec{\rho}_{1}=\vec{\rho}_{\Gamma,1}$ on $\Gamma_1\times (0,T)$, $(\rho_0,\rho_{\Gamma,0})\in L^\infty(0,T;L^\infty(\Omega\times \Gamma_1))$, and $(g,g_\Gamma)\in L^1(0,T;\mathcal{H})$. Then, there exists a positive constant $C$ such that the weak solution $(u,u_\Gamma)\in C^0([0,T];\mathcal{H})$ of \eqref{WP:problem:01} satisfies 
	\begin{align}
	\label{prop:WP:01:conclu}
		\max_{t\in [0,T]}\|(u,u_\Gamma)(t)\|_{\mathcal{H}} ^2\leq C \left(\|(g,g_\Gamma)\|_{L^1(0,T;\mathcal{H})} ^2+ \|(u_0,u_{\Gamma,0})\|_{\mathcal{H}}^2 \right). 
	\end{align}  
\end{proposition}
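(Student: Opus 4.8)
The plan is to obtain \eqref{prop:WP:01:conclu} first as an a priori estimate for solutions of \eqref{WP:problem:01} that are regular enough to justify the integrations by parts below, and then to construct the weak solution $(u,u_\Gamma)\in C^0([0,T];\mathcal{H})$ carrying this bound by a Galerkin approximation and a density argument. For the a priori estimate, assume $(u,u_\Gamma)$ solves \eqref{WP:problem:01} with $\Delta u\in L^2(\Omega)$, $\pnu u\in L^2(\Gamma_1)$ and $\Delta_\Gamma u_\Gamma\in L^2(\Gamma_1)$; test the bulk equation with $\overline{u}$ and integrate over $\Omega$, test the boundary equation with $\overline{u_\Gamma}$ and integrate over $\Gamma_1$, and add the two identities. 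Green's formula in $\Omega$ contributes the boundary term $-di\int_{\Gamma_1}\pnu u\,\overline{u_\Gamma}\,d\sigma$ (there is no contribution on $\Gamma_0$ since $u=0$ there, and the trace of $u$ on $\Gamma_1$ equals $u_\Gamma$), and the surface divergence theorem on the closed manifold $\Gamma_1$ turns $-\delta i\int_{\Gamma_1}\Delta_\Gamma u_\Gamma\,\overline{u_\Gamma}\,d\sigma$ into $\delta i\int_{\Gamma_1}|\nabla_\Gamma u_\Gamma|^2\,d\sigma$ with no boundary term. The key structural point is that the term $+di\int_{\Gamma_1}\pnu u\,\overline{u_\Gamma}\,d\sigma$ coming from the Neumann term in the boundary equation cancels exactly the one produced by Green's formula; this is why the coefficient $d$ must be common to the bulk operator and to the boundary equation and why the transmission condition $u=u_\Gamma$ is used. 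Taking the real part then annihilates the two purely imaginary second-order quantities $di\int_\Omega|\nabla u|^2\,dx$ and $\delta i\int_{\Gamma_1}|\nabla_\Gamma u_\Gamma|^2\,d\sigma$, leaving
\[
\tfrac12\tfrac{d}{dt}\|(u,u_\Gamma)(t)\|_{\mathcal{H}}^2
= -\mathrm{Re}\!\int_\Omega(\vec{\rho}_1\cdot\nabla u)\,\overline{u}\,dx
  -\mathrm{Re}\!\int_{\Gamma_1}(\vec{\rho}_{\Gamma,1}\cdot\nabla_\Gamma u_\Gamma)\,\overline{u_\Gamma}\,d\sigma + R,
\]
where $R$ collects the zeroth-order terms coming from $\rho_0$, $\rho_{\Gamma,0}$ together with $\mathrm{Re}\,\langle(g,g_\Gamma),(u,u_\Gamma)\rangle_{\mathcal{H}}$ and satisfies $|R|\leq C\|(u,u_\Gamma)(t)\|_{\mathcal{H}}^2+\|(g,g_\Gamma)(t)\|_{\mathcal{H}}\|(u,u_\Gamma)(t)\|_{\mathcal{H}}$.

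The two remaining terms are handled using that $\vec{\rho}_1$, $\vec{\rho}_{\Gamma,1}$ are real valued, so $\mathrm{Re}[(\vec{\rho}_1\cdot\nabla u)\overline{u}]=\tfrac12\,\vec{\rho}_1\cdot\nabla|u|^2$ and likewise on $\Gamma_1$; an integration by parts (once more using $u=0$ on $\Gamma_0$ and $u=u_\Gamma$ on $\Gamma_1$, and the fact that $\Gamma_1$ has no boundary) moves the derivative onto $\vec{\rho}_1$, $\vec{\rho}_{\Gamma,1}$, producing only the quantities $\int_\Omega(\div\vec{\rho}_1)|u|^2\,dx$, $\int_{\Gamma_1}(\vec{\rho}_1\cdot\nu)|u_\Gamma|^2\,d\sigma$ and $\int_{\Gamma_1}\text{div}_\Gamma(\vec{\rho}_{\Gamma,1})|u_\Gamma|^2\,d\sigma$, all bounded by $C\|(u,u_\Gamma)(t)\|_{\mathcal{H}}^2$ thanks to $\vec{\rho}_1,\vec{\rho}_{\Gamma,1}\in L^\infty(0,T;W^{1,\infty})$; note that $\|u_\Gamma\|_{L^2(\Gamma_1)}$ is already a component of the $\mathcal{H}$-norm, so no trace inequality is needed here. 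We thus obtain
\[
\tfrac{d}{dt}\|(u,u_\Gamma)(t)\|_{\mathcal{H}}^2\leq C\|(u,u_\Gamma)(t)\|_{\mathcal{H}}^2+2\|(g,g_\Gamma)(t)\|_{\mathcal{H}}\|(u,u_\Gamma)(t)\|_{\mathcal{H}},
\]
and, after the usual regularization (replace $\|(u,u_\Gamma)\|_{\mathcal{H}}^2$ by $\|(u,u_\Gamma)\|_{\mathcal{H}}^2+\varepsilon$ before dividing and let $\varepsilon\to0$), Gr\"onwall's lemma gives $\|(u,u_\Gamma)(t)\|_{\mathcal{H}}\leq e^{CT}\big(\|(u_0,u_{\Gamma,0})\|_{\mathcal{H}}+\|(g,g_\Gamma)\|_{L^1(0,T;\mathcal{H})}\big)$; squaring yields \eqref{prop:WP:01:conclu}.

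It remains to construct the solution and to justify the computation. I would run a Galerkin scheme in a basis of $\mathcal{V}$ (for instance the eigenfunctions of the elliptic operator naturally associated with the inner product $\langle\cdot,\cdot\rangle_{\mathcal{V}}$, which is compactly embedded in $\mathcal{H}$): projecting \eqref{WP:problem:01} onto the finite-dimensional subspaces yields linear ODE systems whose solutions are smooth enough for the computation above, so the a priori bound holds uniformly in the discretization parameter; passing to the limit gives a weak solution, and uniqueness follows by applying the estimate to the difference of two solutions with vanishing data. Since the energy identity shows $t\mapsto\|(u,u_\Gamma)(t)\|_{\mathcal{H}}^2$ to be continuous, weak continuity in $\mathcal{H}$ upgrades to $(u,u_\Gamma)\in C^0([0,T];\mathcal{H})$. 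Alternatively, one may note that the principal part of \eqref{WP:problem:01} generates a unitary group on $\mathcal{H}$ by Stone's theorem, solve for data in $\mathcal{V}$ by Duhamel's formula and a contraction in $C^0([0,T];\mathcal{V})$, and extend to $\mathcal{H}$-data by density.

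The main obstacle is the presence of the first-order drift terms $\vec{\rho}_1\cdot\nabla u$ and $\vec{\rho}_{\Gamma,1}\cdot\nabla_\Gamma u_\Gamma$: these are not bounded perturbations of the principal operator on $\mathcal{H}$, and the Schr\"odinger principal part provides no smoothing, so a naive perturbation argument does not close. What makes the estimate work is exactly the observation above — after taking the real part and integrating by parts once more, those terms involve only $|u|^2$ and $|u_\Gamma|^2$, with no derivatives — which is both why the $W^{1,\infty}$ regularity of $\vec{\rho}_1,\vec{\rho}_{\Gamma,1}$ is required and why, at this $L^2$ level, no sign condition on $\vec{\rho}_1\cdot\nu$ is needed (in contrast with the controllability results later in the paper). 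The remaining difficulty is the routine one of justifying the multiplier computation for a merely weak solution, which is dispatched by the Galerkin approximation.
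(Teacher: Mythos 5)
Your argument is correct and follows essentially the same route as the paper: multiply the bulk equation by $\overline{u}$ and the boundary equation by $\overline{u}_\Gamma$, add, take real parts so that the second-order terms cancel via Green's formula, the transmission condition $u=u_\Gamma$ and the surface divergence theorem, rewrite the real part of the drift terms as $\tfrac12\vec{\rho}_1\cdot\nabla|u|^2$ and integrate by parts to land on zeroth-order quantities, and conclude with Gr\"onwall and H\"older. The paper states the result for the weak solution and only alludes to the density argument, so your additional Galerkin/semigroup discussion is a harmless elaboration of the same proof rather than a different approach.
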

\begin{proof}
	Firstly, we multiply the first equation of \eqref{WP:problem:01} by $\overline{u}$ and integrate in $\Omega$. Secondly, we multiply the second equation by $\overline{u}_\Gamma$ and integrate on $\Gamma_1 \times (0,T)$. Next, we add these identities and take the real part on the obtained equation. This yields
	\begin{align}
		\label{WP:01:eq:01}
		\begin{split} 
		&\Re  \int_\Omega \overline{u} \pt u dx +d\Im \int_\Omega \overline{u}\Delta u dx + \Re \int_\Omega \overline{u}(\vec{\rho}_1 \cdot \nabla u)dx +\Re \int_\Omega \rho_0 |u|^2 dx\\
		&+\Re \int_{\Gamma_1} \overline{u}_\Gamma \pt u_\Gamma d\sigma -d\Im \int_{\Gamma_1} \overline{u}_\Gamma \pnu u d\sigma +d\Im \int_{\Gamma_1}  \overline{u}_\Gamma  \Delta_\Gamma u_\Gamma d\sigma \\
		&+\Re \int_{\Gamma_1} \overline{u}_\Gamma (\vec{\rho}_{\Gamma,1}\cdot \nabla_\Gamma u_\Gamma) d\sigma +\Re \int_{\Gamma_1} \rho_{\Gamma,0} |u_\Gamma|^2 d\sigma = \Re \int_\Omega \overline{u}g dx + \Re \int_{\Gamma_1} \overline{u}_\Gamma g_\Gamma d\sigma,
		\end{split}
	\end{align} 
	a.e. in $(0,T)$. We notice that,
	\begin{align}
	\label{WP:01:eq:02}
		\Re \int_\Omega \overline{u} \pt u dx +\Re \int_{\Gamma_1} \overline{u}_\Gamma \pt u_\Gamma d\sigma =\dfrac{d}{dt} \left(\dfrac{1}{2} \int_\Omega |u|^2 dx +\dfrac{1}{2} \int_{\Gamma_1} |u_\Gamma|^2 d\sigma \right). 
	\end{align}
	Moreover, integration by parts and surface divergence theorem implies that
	\begin{align}
	\label{WP:01:eq:03}
		d\Im \int_\Omega \overline{u}\Delta u dx -d\Im \int_{\Gamma_1} \overline{u}_\Gamma \pnu u d\sigma +\delta \Im \int_{\Gamma_1} \overline{u}_\Gamma \Delta_\Gamma u_\Gamma d\sigma=0, 
	\end{align}
	where we have used that $u=u_\Gamma$ on $\Gamma_1\times (0,T)$. In addition, 
	\begin{align}
	\label{WP:01:eq:04}
	\begin{split} 
		&\Re \int_\Omega \overline{u} (\vec{\rho}_1 \cdot \nabla u) dx +\Re \int_{\Gamma_1} \overline{u}_\Gamma (\vec{\rho}_{\Gamma,1} \cdot \nabla_\Gamma u_\Gamma) d\sigma\\
		=&-\dfrac{1}{2} \int_\Omega \text{div}(\vec{\rho}_1) |u|^2 dx -\dfrac{1}{2} \int_{\Gamma_1} \text{div}_\Gamma (\vec{\rho}_{\Gamma,1}) |u_\Gamma|^2 d\sigma +\dfrac{1}{2} \int_{\Gamma_1} (\vec{\rho}_1 \cdot \nu) |u|^2 d\sigma.
	\end{split}  
	\end{align}
	Substituting \eqref{WP:01:eq:02}, \eqref{WP:01:eq:03} and \eqref{WP:01:eq:04} into \eqref{WP:01:eq:01} and applying Cauchy-Scharwz inequality we deduce that
	\begin{align*}
		\dfrac{d}{dt} \|(u,u_\Gamma)(t)\|_{\mathcal{H}}^2 \leq C \|(u,u_\Gamma)(t)\|_\mathcal{H}^2 + C \|(u,u_\Gamma)(t)\|_\mathcal{H}\|(g,g_\Gamma)(t)\|_{\mathcal{H}}. 
	\end{align*}
	Then, by Gronwall's and Holder's inequalities,
	we obtain \eqref{prop:WP:01:conclu}. This ends the proof of the Proposition \ref{prop:WP:01}.
\end{proof}
Under additional assumptions on $(\vec{\rho}_1,\vec{\rho}_{\Gamma,1})$ and $(\rho_0,\rho_{\Gamma,0})$, we get the following result. 
\begin{proposition}
\label{WP:prop:02}
	Suppose that $(u_0,u_{\Gamma,0})\in \mathcal{V}$ and $(g,g_\Gamma)\in L^1(0,T;\mathcal{V})$. Moreover, assume that
	\begin{align}
	\label{WP:prop:02:additional:assump}
	\vec{\rho}_1 \in [L^\infty (0,T; W^{1,\infty}(\Omega;\mathbb{R}))]^n, \quad   \vec{\rho}_{\Gamma,1} \in  [L^\infty (0,T; W^{1,\infty}(\Gamma_1;\mathbb{R}))]^n,
	\end{align}
	with $\vec{\rho}_1 =\vec{\rho}_{\Gamma,1},\text{ on }\Gamma_1\times (0,T)$ and $\vec{\rho}_1 \cdot \nu \leq 0$ on $\partial \Omega\times (0,T)$. We also assume that
	\begin{align*}
		\rho_0 \in L^\infty(0,T;W^{1,\infty}(\Omega)),\quad \rho_{\Gamma,0}\in L^\infty(0,T;W^{1,\infty}(\Gamma_1)),
	\end{align*}
	with $\rho_0=\rho_{\Gamma,0}$ on $\Gamma_1\times (0,T)$.
	 Then, the weak solution of \eqref{WP:problem:01} belongs to $C^0([0,T];\mathcal{V})$. Moreover, there exists $C>0$ such that
	\begin{align}
	\label{WP:prop:02:conclu}
		\max_{t\in [0,T]}\|(u,u_\Gamma)\|_{\mathcal{V}}^2 \leq C\left( \|(g,g_\Gamma)\|_{L^1(0,T;\mathcal{V})}^2 + \|(u_0,u_{\Gamma,0})\|_{\mathcal{V}}^2 \right). 
	\end{align}
\end{proposition}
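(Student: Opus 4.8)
The plan is to establish the a priori estimate \eqref{WP:prop:02:conclu} for spatially smooth solutions via an energy identity in $\mathcal{V}$, and then to pass to the general case by a Galerkin approximation together with Proposition \ref{prop:WP:01}.

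First I would reformulate \eqref{WP:problem:01} abstractly. Let $-\mathcal{A}$ denote the self-adjoint positive operator on $\mathcal{H}$ associated with the sesquilinear form
\[
a\big((u,u_\Gamma),(v,v_\Gamma)\big)=d\int_\Omega\nabla u\cdot\nabla\overline{v}\,dx+\delta\int_{\Gamma_1}\nabla_\Gamma u_\Gamma\cdot\nabla_\Gamma\overline{v}_\Gamma\,d\sigma,
\]
which is coercive on $\mathcal{V}$ by the Poincar\'e-type inequality recalled above; integrating by parts and using $u=u_\Gamma$ on $\Gamma_1$, $u=0$ on $\Gamma_0$ one finds $\mathcal{A}(u,u_\Gamma)=(d\Delta u,\,-d\pnu u+\delta\Delta_\Gamma u_\Gamma)$ on its domain, and that $a$ is equivalent to $\|\cdot\|_{\mathcal{V}}^2$. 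With $U=(u,u_\Gamma)$, $G=(g,g_\Gamma)$ and $BU=(\vec{\rho}_1\cdot\nabla u+\rho_0 u,\ \vec{\rho}_{\Gamma,1}\cdot\nabla_\Gamma u_\Gamma+\rho_{\Gamma,0}u_\Gamma)$, system \eqref{WP:problem:01} reads $\partial_t U=i\mathcal{A}U-BU+G$. Since $\mathcal{V}\hookrightarrow\mathcal{H}$ is compact, there is a Hilbert basis of $\mathcal{H}$ consisting of eigenvectors of $-\mathcal{A}$; let $U_m$ be the corresponding Galerkin approximation on the span $\mathcal{H}_m$ of the first $m$ of them, and $P_m$ the orthogonal projection onto $\mathcal{H}_m$ (also bounded on $\mathcal{V}$), so that $\partial_t U_m=i\mathcal{A}U_m-BU_m+P_mG$ and $U_m(0)=P_m(u_0,u_{\Gamma,0})$.

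For $U_m$, which is smooth in space, I would differentiate $t\mapsto a(U_m,U_m)=\langle-\mathcal{A}U_m,U_m\rangle_{\mathcal{H}}$ and substitute the equation. Since $\langle-\mathcal{A}U_m,i\mathcal{A}U_m\rangle_{\mathcal{H}}$ is purely imaginary, the principal (second-order) term cancels and one is left with
\[
\tfrac12\,\tfrac{d}{dt}\,a(U_m,U_m)=-\Re\,\langle-\mathcal{A}U_m,BU_m\rangle_{\mathcal{H}}+\Re\,a(U_m,G).
\]
The source term obeys $|\Re\,a(U_m,G)|\le C\|U_m\|_{\mathcal{V}}\|G\|_{\mathcal{V}}$, which is exactly where the hypothesis $G\in L^1(0,T;\mathcal{V})$ enters. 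The delicate term is
\begin{align*}
\langle-\mathcal{A}U_m,BU_m\rangle_{\mathcal{H}}
&=\int_\Omega(-d\Delta u_m)\,\overline{(\vec{\rho}_1\cdot\nabla u_m+\rho_0u_m)}\,dx\\
&\quad+\int_{\Gamma_1}(d\pnu u_m-\delta\Delta_\Gamma u_{\Gamma,m})\,\overline{(\vec{\rho}_{\Gamma,1}\cdot\nabla_\Gamma u_{\Gamma,m}+\rho_{\Gamma,0}u_{\Gamma,m})}\,d\sigma,
\end{align*}
which formally carries one derivative more than the $\mathcal{V}$-norm controls; I would handle it by integrating by parts. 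In the interior the top-order piece becomes $\frac d2\int_\Omega\vec{\rho}_1\cdot\nabla|\nabla u_m|^2\,dx=-\frac d2\int_\Omega(\div\vec{\rho}_1)|\nabla u_m|^2\,dx+\frac d2\int_{\partial\Omega}(\vec{\rho}_1\cdot\nu)|\nabla u_m|^2\,d\sigma$, and similarly on $\Gamma_1$ with no boundary term (as $\Gamma_1$ is closed). The $\Gamma_1$-boundary terms carrying $\pnu u_m$, which the energy does not control, cancel between the bulk and boundary equations thanks to $u=u_\Gamma$ on $\Gamma_1$ and the compatibilities $\vec{\rho}_1=\vec{\rho}_{\Gamma,1}$, $\rho_0=\rho_{\Gamma,0}$ there; the surviving boundary terms reduce, after using $\nabla u_m=(\pnu u_m)\nu$ on $\Gamma_0$, to expressions proportional to $\int_{\partial\Omega}(\vec{\rho}_1\cdot\nu)|\pnu u_m|^2$, which have the favourable sign because $\vec{\rho}_1\cdot\nu\le 0$ on $\partial\Omega$ and may be discarded. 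The remaining contributions — tangential boundary terms and the lower-order ones carrying $\rho_0,\rho_{\Gamma,0}$ or first derivatives of $\vec{\rho}_1,\vec{\rho}_{\Gamma,1}$ — are bounded by $C\,a(U_m,U_m)$ using the $W^{1,\infty}$ regularity of the coefficients and the Poincar\'e inequality. Collecting everything, $\frac{d}{dt}\|U_m(t)\|_{\mathcal{V}}^2\le C\|U_m(t)\|_{\mathcal{V}}^2+C\|U_m(t)\|_{\mathcal{V}}\|G(t)\|_{\mathcal{V}}$, and Gronwall's inequality together with $\|U_m(0)\|_{\mathcal{V}}\le\|(u_0,u_{\Gamma,0})\|_{\mathcal{V}}$ gives \eqref{WP:prop:02:conclu} for $U_m$, with constant independent of $m$.

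Finally I would pass to the limit: up to a subsequence $U_m\rightharpoonup U$ weakly-$\ast$ in $L^\infty(0,T;\mathcal{V})$, and by Proposition \ref{prop:WP:01} the $U_m$ also converge in $C^0([0,T];\mathcal{H})$ to the unique weak solution of \eqref{WP:problem:01}, so $U$ is that weak solution and inherits \eqref{WP:prop:02:conclu}. It then lies in $C_w([0,T];\mathcal{V})\cap C^0([0,T];\mathcal{H})$, and continuity of $t\mapsto\|U(t)\|_{\mathcal{V}}$ (read off from the energy identity obtained in the limit) upgrades this to $U\in C^0([0,T];\mathcal{V})$. I expect the main obstacle to be the control of $\langle-\mathcal{A}U_m,BU_m\rangle_{\mathcal{H}}$: one must carry out the integrations by parts carefully, track every boundary term on $\Gamma_0$ and $\Gamma_1$, and check that the ones not bounded by the energy either cancel (by the Wentzell transmission condition and the compatibility of the coefficients on $\Gamma_1$) or carry the sign forced by $\vec{\rho}_1\cdot\nu\le 0$.
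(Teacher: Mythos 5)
Your proposal is correct and follows essentially the same route as the paper: pairing the equation with $-\mathcal{A}U$ (i.e.\ multiplying by $-d\Delta\overline{u}$ in $\Omega$ and by $-\delta\Delta_\Gamma\overline{u}_\Gamma+d\pnu\overline{u}$ on $\Gamma_1$), cancelling the uncontrolled $\pnu u$ boundary terms via the transmission condition and the coefficient compatibilities on $\Gamma_1$, discarding the signed term coming from $\vec{\rho}_1\cdot\nu\leq 0$, and concluding by Gronwall. The only difference is that you make explicit the Galerkin/density scaffolding that the paper leaves implicit.
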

\begin{proof}
	We multiply the first equation of \eqref{WP:problem:01} by $-d\Delta \overline{u}$ and integrate in $\Omega \times (0,T)$. Next, we multiply the second equation of \eqref{WP:problem:01} by $(-\delta \Delta_\Gamma \overline{u}_\Gamma + d\pnu \overline{u})$ and integrate on $\Gamma_1 \times (0,T)$. Then, adding these identities and taking the real part 
	we have
	\begin{align}
	\label{WP:problem:02:01}
		\begin{split}
			&-d\Re \int_\Omega \pt u \Delta \overline{u}dx -d\Re \int_\Omega (\vec{\rho}_1 \cdot\nabla u)\Delta \overline{u}dx -d\Re \int_\Omega pu\Delta \overline{u} dx\\
			&+\Re \int_{\Gamma_1} \pt u_\Gamma (-\delta \Delta_\Gamma \overline{u}_\Gamma +d\pnu \overline{u})d\sigma +\Re \int_{\Gamma_1} (\vec{\rho}_1 \cdot \nabla_\Gamma u_\Gamma)(-\delta \Delta_\Gamma \overline{u}_\Gamma +d\pnu \overline{u})d\sigma \\
			&+\Re \int_{\Gamma_1} p_\Gamma u_\Gamma (-\delta \Delta_\Gamma \overline{u}_\Gamma +d\pnu \overline{u})d\sigma\\
			=&-d\Re \int_\Omega g\Delta \overline{u} dx + \Re \int_{\Gamma_1} g_\Gamma (-\delta \Delta_\Gamma \overline{u}_\Gamma +d\pnu \overline{u})d\sigma. 
		\end{split}
	\end{align}
	Integration by parts and surface divergence theorem imply that
	\begin{align}
	\label{WP:problem:02:02}
		\begin{split}
			&-d\Re \int_\Omega \pt u \Delta \overline{u} dx + \Re \int_{\Gamma_1} \pt u_\Gamma (-\delta \Delta _\Gamma \overline{u}_\Gamma +d \pnu \overline{u})d\sigma\\
			=& \dfrac{d}{dt} \left(\dfrac{1}{2}d \int_\Omega |\nabla u|^2 dx + \dfrac{1}{2}\delta \int_{\Gamma_1} |\nabla_\Gamma u_\Gamma|^2 d\sigma \right). 
		\end{split}
	\end{align}   
	On the other hand,
	\begin{align*}
		\begin{split} 
		\Lambda:=&-d\Re \int_\Omega (\vec{\rho}_1 \cdot \nabla u) \Delta \overline{u} dx + \Re \int_{\Gamma_1} (\vec{\rho}_{\Gamma,1} \cdot \nabla_\Gamma u_\Gamma) (-\delta \Delta_\Gamma \overline{u}_\Gamma +d\pnu \overline{u})d\sigma \\
		=&d\Re \int_\Omega \nabla (\vec{\rho}_1 \cdot \nabla u)\cdot \nabla \overline{u} dx +\dfrac{1}{2} d\int_{\partial \Omega} (\vec{\rho}_1 \cdot \nabla u) \pnu \overline{u} d\sigma \\
		&+\delta \Re \int_{\Gamma_1} \nabla_\Gamma (\vec{\rho}_{\Gamma,1} \cdot \nabla_\Gamma u_\Gamma)\cdot \nabla_\Gamma \overline{u}_\Gamma d\sigma + d\Re \int_{\Gamma_1} (\vec{\rho}_{\Gamma,1} \cdot \nabla_\Gamma u_\Gamma) \pnu \overline{u}d\sigma.   
		\end{split}  
	\end{align*}
	Since
	\begin{align*} 
	\nabla (\vec{\rho}_1 \cdot \nabla u)\cdot \nabla \overline{u}&=\vec{\nabla} \rho_{1}(\nabla u,\nabla \overline{u}) +\dfrac{1}{2} \vec{\rho}_1 \cdot \nabla (|\nabla u|^2),\\
	\nabla_\Gamma (\vec{\rho}_{\Gamma,1} \cdot \nabla_\Gamma u_\Gamma) \cdot \nabla_\Gamma \overline{u}_\Gamma &= \vec{\nabla}_\Gamma \vec{\rho}_{\Gamma,1} (\nabla_\Gamma u_\Gamma, \nabla_\Gamma \overline{u}_\Gamma)+\dfrac{1}{2} \vec{\rho}_{\Gamma,1} \cdot \nabla_\Gamma (|\nabla_\Gamma u_\Gamma|^2),	\end{align*}
	and that $ \nabla u =\nabla_\Gamma u + (\pnu u) \nu$ on $\Gamma_1\times (0,T)$, we obtain
	\begin{align}
		\label{WP:problem:02:03}
		\begin{split} 
		\Lambda=&d\Re \int_\Omega \vec{\nabla}\vec{\rho}_1 (\nabla u,\nabla \overline{u}) dx +\delta \Re \int_{\Gamma_1} \vec{\nabla}\vec{\rho}_{\Gamma,1} (\nabla_\Gamma u_\Gamma, \nabla_\Gamma \overline{u}_\Gamma) d\sigma\\
		&-\dfrac{1}{2} d\int_\Omega \text{div}(\vec{\rho}_1) |\nabla u|^2 -\dfrac{1}{2}\delta \int_{\Gamma_1} \text{div} (\vec{\rho}_{\Gamma,1}) |\nabla_\Gamma u_\Gamma|^2 d\sigma\\
		&-\dfrac{1}{2}d \int_{\partial \Omega} (\vec{\rho}_1 \cdot \nu )|\pnu u|^2 d\sigma +\dfrac{1}{2} d \int_{\Gamma_1} (\vec{\rho}_1 \cdot \nu )|\nabla_\Gamma u_\Gamma|^2 d\sigma,
		\end{split}  
	\end{align}	   
	where we have integrated by parts and used the fact that $\vec{\rho}_{1}=\vec{\rho}_{\Gamma,1}$ on $\Gamma_1\times (0,T)$. In addition, since $\rho_0=\rho_{\Gamma,0}$ on $\Gamma_1\times (0,T)$, we have
	\begin{align}
		\label{WP:problem:02:04}
		\begin{split} 
		&-d\Re \int_\Omega pu \Delta \overline{u} dx +\Re \int_{\Gamma_1} p_\Gamma u_\Gamma (-\delta \Delta_\Gamma \overline{u}_\Gamma +d\pnu \overline{u}) d\sigma\\
		=& d\int_\Omega \Re (p) |\nabla u|^2 + \delta \int_{\Gamma_1} \Re (p_\Gamma) |\nabla_\Gamma u_\Gamma|^2 d\sigma \\
		&+d\Re \int_\Omega u\nabla p \cdot \nabla \overline{u} dx +\delta \Re \int_{\Gamma_1} u_\Gamma \nabla_\Gamma p_\Gamma \cdot \nabla_\Gamma \overline{u}_\Gamma d\sigma.  
		\end{split} 
	\end{align}
	Now, since $g=0$ on $\Gamma_0 \times (0,T)$ and $g=g_\Gamma$ on $\Gamma_1\times (0,T)$, we deduce that
	\begin{align}
	\label{WP:problem:02:05}
		\begin{split}
			&-d\Re \int_\Omega g\Delta \overline{u} dx +\Re \int_{\Gamma_1} g_\Gamma (-\delta \Delta_\Gamma \overline{u}_\Gamma +d\pnu \overline{u}) d\sigma \\
			=& d\Re \int_\Omega \nabla g\cdot \nabla \overline{u} dx + \delta \Re \int_{\Gamma_1} \nabla_\Gamma g \cdot \nabla_\Gamma \overline{u}_\Gamma d\sigma. 
		\end{split}
	\end{align} 
	Then, substituting \eqref{WP:problem:02:02}, \eqref{WP:problem:02:03}, \eqref{WP:problem:02:04}, \eqref{WP:problem:02:05} into \eqref{WP:problem:02:01} and using the fact that $\vec{\rho}_1 \cdot \nu \leq 0$ on $\partial \Omega \times (0,T)$, we can assert that
	\begin{align*}
		\dfrac{d}{dt} \|(u,u_\Gamma)(t)\|_{\mathcal{V}}^2 \leq C\|(u,u_\Gamma)(t)\|_{\mathcal{V}}^2 
+		C\|(u,u_\Gamma)(t)\|_{\mathcal{V}}\|(g,g_\Gamma)(t)\|_\mathcal{V} 
	\end{align*}
	Then, applying Gronwall's and H\"older's inequalities we easily deduce \eqref{WP:prop:02:conclu} and the proof of the Proposition \ref{WP:prop:02} is finished.  
\end{proof}
\begin{remark}
 	It is also possible to obtain \eqref{WP:prop:02:conclu} considering regularity assumptions on $(\vec{\rho}_1,\vec{\rho}_{\Gamma,1})$ and $(\rho_0,\rho_{\Gamma,0})$ in the time variable. However, in order to simplify the presentation of the rest of the results in this paper, we do not consider these situations.  
\end{remark}

\subsection{Hidden regularity}

In this section, we devote to deduce a hidden regularity result for solutions of the Schr\"odinger equation with dynamic boundary conditions. For this purposes, we start giving the following identity. 

\begin{proposition}  \label{identq}
	Suppose that $\vec{q} \in C^2(\overline{\Omega}\times [0,T];\mathbb{R}^n)$. Moreover, consider $\vec{\rho}_1 \in [L^{\infty}(0,T; W^{1,\infty}(\Omega;\mathbb{R}))]^n$, $ 
	\vec{\rho}_{\Gamma,1} \in [ L^{\infty}(0,T;  W^{1,\infty}(\Gamma_1;\mathbb{R}))]^n$ and $\rho_0 \in L^\infty(\Omega \times (0,T))$, $\rho_{\Gamma,0} \in  L^\infty(\Gamma_1\times (0,T))$.
	Then, for each $d,\delta>0$, $(u_0,u_{\Gamma,0})\in \mathcal{V}$ and  $(g,g_\Gamma) \in L^1(0,T; \mathcal V)$, the solution $(u,u_\Gamma)$ of \eqref{WP:problem:01} satisfies
	\begin{align}
		\label{eq:prop:identity}
		&\dfrac{1}{2}d\int_0^T\int_{\partial \Omega} (\vec{q}\cdot \nu) |\pnu u|^2 dxdt=X+Y+F,
	\end{align}
	where $X$, $Y$ and $F$ are given by
	\begin{align*}
		\begin{split} 
		X=&-\dfrac{1}{2}\delta \IGT{(\vec{q}\cdot \nu) |\nabla_\Gamma u_\Gamma|^2} -\dfrac{1}{2} \Im \IGT{(\vec{q}\cdot \nu)\rho_{\Gamma,0} |u_\Gamma|^2}\\
		&+\dfrac{1}{2} d\IOT{\vec{\nabla} \vec{q}(\nabla u,\nabla \overline{u})} +\dfrac{1}{2}d\IOT{\text{div}(\vec{q}) |\nabla u|^2}\\
		&+\dfrac{1}{2}\delta \int_0^T\int_{\Gamma_1} \vec{\nabla}_\Gamma \vec{q}(\nabla_\Gamma u_\Gamma, \nabla_\Gamma \overline{u}_\Gamma) d\sigma dt +\dfrac{1}{2}\delta \IGT{\text{div}(\vec{q}) |\nabla_\Gamma u_\Gamma|^2}\\
		&+\Im \IOT{(\vec{\rho}_1\cdot \nabla u)(\vec{q}\cdot \nabla \vec{u})}+\dfrac{1}{2}\Im \IOT{\rho_0 \text{div}(\vec{q}) |u|^2}\\
		&+\Im \IGT{(\vec{\rho}_{\Gamma,1} \cdot \nabla_\Gamma u_\Gamma) (\vec{q}\cdot \nabla_\Gamma u_\Gamma)} +\dfrac{1}{2}\Im \IGT{\rho_{\Gamma,0} \text{div}(\vec{q}) |u_\Gamma|^2},
		\end{split}
	\end{align*}
	\begin{align*}
		Y=&\dfrac{1}{2}\Im \IOT{\overline{u} \nabla u \cdot \pt \vec{q}} -\dfrac{1}{2} \Im \int_\Omega \overline{u} \nabla u \cdot \vec{q}\bigg|_0^T dx\\
		& -\dfrac{1}{2} d\Re \IGT{(\vec{q}\cdot \nu )\overline{u}_\Gamma \pnu u} - \dfrac{1}{2}\Im \IGT{(\vec{q}\cdot \nabla_\Gamma \nu) (\vec{\rho}_{\Gamma,1} \cdot u_\Gamma)\overline{u}_\Gamma} \\
		&+\dfrac{1}{2}\Im \IGT{(\nabla_\Gamma u_\Gamma \cdot \pt \vec{q})\overline{u}_\Gamma}-\dfrac{1}{2}\Im \int_{\Gamma_1} (\vec{q}\cdot \nabla_\Gamma u_\Gamma)\overline{u}_\Gamma \bigg|_0^T d\sigma\\
		&+\dfrac{1}{2}d\Re \IOT{\overline{u}\nabla (\text{div}(\vec{q}))\cdot \nabla u} +\dfrac{1}{2} d\Re \IGT{(\pnu u) \vec{q}\cdot \nabla_\Gamma u_\Gamma}\\
		&+\dfrac{1}{2}\delta \Re \IGT{\nabla_\Gamma (\text{div}(\vec{q}) \cdot \nabla_\Gamma u_\Gamma)\overline{u}_\Gamma} +\dfrac{1}{2}\Im \IOT{\text{div}(\vec{q}) (\vec{\rho}\cdot \nabla u)\overline{u}}\\
		&+\Im \IOT{\rho_0 (\vec{q}\cdot \nabla \overline{u})u}+\dfrac{1}{2}\IGT{\text{div} (\vec{q}) (\vec{\rho}_{\Gamma,1} \cdot \nabla_\Gamma u_\Gamma)\overline{u}_\Gamma}\\
		&+\Im \IGT{\rho_{\Gamma,0} (\vec{q}\cdot \nabla_\Gamma \overline{u}_\Gamma) u_\Gamma},       
	\end{align*}
	and 
	\begin{align*}
		F=&\dfrac{1}{2}\Im \IGT{(\vec{q}\cdot \nu ) \overline{u}_\Gamma g_\Gamma} - \Im \IOT{g\left(\vec{q}\cdot \nabla \overline{u} + \dfrac{1}{2} \text{div}(\vec{q}) \overline{u}  \right) }\\
		&-\Im \IGT{g_\Gamma \left(\vec{q}\cdot \nabla_\Gamma u_\Gamma +\dfrac{1}{2} \text{div}(\vec{q}) \overline{u}_\Gamma \right)}.
	\end{align*}
\end{proposition}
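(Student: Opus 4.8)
The identity \eqref{eq:prop:identity} is a Rellich--Pohozaev type multiplier identity adapted to the dynamic boundary condition. The plan is to multiply the bulk equation in \eqref{WP:problem:01} by the radial-type multiplier $\vec q \cdot \nabla \overline u$ and the boundary equation by $\vec q \cdot \nabla_\Gamma \overline u_\Gamma$ (plus a lower-order correction $\tfrac12 \operatorname{div}(\vec q)\,\overline u$, resp. $\tfrac12 \operatorname{div}(\vec q)\,\overline u_\Gamma$, to symmetrize the $L^2$ terms), integrate over $\Omega\times(0,T)$ and $\Gamma_1\times(0,T)$ respectively, add, and take imaginary parts. First I would justify that the computation is legitimate: by Proposition \ref{WP:prop:02} the solution lies in $C^0([0,T];\mathcal V)$, but $\vec q\cdot\nabla\overline u$ is only an $H^0$-type multiplier against $\Delta u$, so the cleanest route is to run the identity first for smooth data (and smooth $(\vec\rho_1,\vec\rho_{\Gamma,1},\rho_0,\rho_{\Gamma,0})$), where all integrations by parts are classical, and then pass to the limit by density using the estimate \eqref{WP:prop:02:conclu}; every term appearing in $X$, $Y$, $F$ is continuous with respect to the $C^0([0,T];\mathcal V)$ topology of $(u,u_\Gamma)$ together with the boundary normal trace, which is controlled by the hidden regularity being established.

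The core of the argument is the treatment of the principal term $-d\,\Re\int_0^T\!\int_\Omega \pt u\,\text{(stuff)}$ and $d\,\Re\int_0^T\!\int_\Omega (i\,\text{-part of the equation})(\vec q\cdot\nabla\overline u)$. For the Laplacian term one uses the standard pointwise identity $\Re\big(\Delta u\,(\vec q\cdot\nabla\overline u)\big) = \operatorname{div}\!\big(\Re(\nabla u\,\overline{(\vec q\cdot\nabla u)}) - \tfrac12 \vec q|\nabla u|^2\big) + \tfrac12 \operatorname{div}(\vec q)|\nabla u|^2 - \vec\nabla\vec q(\nabla u,\nabla\overline u)$, integrate, and collect: the divergence produces the boundary integral $\tfrac12 d\int_{\partial\Omega}(\vec q\cdot\nu)|\pnu u|^2$ on $\Gamma_0$ (where $u=0$, so only the normal derivative survives) plus, on $\Gamma_1$, terms that must be combined with the contributions coming from multiplying the \emph{boundary} equation. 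The time-derivative term, after integration by parts in $t$, yields the endpoint contributions $\int_\Omega \overline u\,\nabla u\cdot\vec q\big|_0^T$ and $\int_{\Gamma_1}(\vec q\cdot\nabla_\Gamma u_\Gamma)\overline u_\Gamma\big|_0^T$ together with the $\pt\vec q$ terms in $Y$; the first-order terms $\vec\rho_1\cdot\nabla u$ and potential terms $\rho_0 u$ are handled by routine integration by parts, producing the $\Im$-integrals with $\operatorname{div}(\vec q)$ and $\vec\nabla\vec q$ in $X$ and $Y$.

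The main obstacle is the bookkeeping on $\Gamma_1$: one has to use the interface conditions $u=u_\Gamma$, $\vec\rho_1=\vec\rho_{\Gamma,1}$, $\rho_0=\rho_{\Gamma,0}$ on $\Gamma_1\times(0,T)$ together with the decomposition $\nabla u = \nabla_\Gamma u_\Gamma + (\pnu u)\nu$, and crucially the tangential Rellich identity on the manifold $\Gamma_1$ — integrating $\Re\big(\Delta_\Gamma u_\Gamma\,(\vec q\cdot\nabla_\Gamma\overline u_\Gamma)\big)$ by parts over the \emph{closed} manifold $\Gamma_1$ (no boundary), which generates the term $\vec\nabla_\Gamma\vec q(\nabla_\Gamma u_\Gamma,\nabla_\Gamma\overline u_\Gamma)$, the $\operatorname{div}(\vec q)|\nabla_\Gamma u_\Gamma|^2$ term, and — from the fact that $\vec q$ restricted to $\Gamma_1$ need not be tangential — the curvature-type term $\Im\int(\vec q\cdot\nabla_\Gamma\nu)(\vec\rho_{\Gamma,1}\cdot u_\Gamma)\overline u_\Gamma$ and the cross term $\tfrac12 d\,\Re\int_{\Gamma_1}(\pnu u)\,\vec q\cdot\nabla_\Gamma u_\Gamma$ appearing in $Y$. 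The $d\pnu u$ coupling term in the boundary equation, tested against $\vec q\cdot\nabla_\Gamma\overline u_\Gamma$ and against the correction $\operatorname{div}(\vec q)\overline u_\Gamma$, must be matched against the $\Gamma_1$-part of the bulk divergence term so that the only surviving principal boundary integral is the stated one on $\partial\Omega$ (which, since $\vec q\cdot\nu$ could a priori be nonzero on $\Gamma_1$, really is an integral over all of $\partial\Omega$, as written). Once all $\Gamma_1$ contributions are regrouped consistently, the remaining terms are exactly $X+Y+F$; the source terms $g,g_\Gamma$ contribute $F$ after using $g=g_\Gamma$ on $\Gamma_1$ and $g=0$ on $\Gamma_0$.
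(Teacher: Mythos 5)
Your plan coincides with the paper's own (sketched) proof: the authors likewise multiply the bulk equation by $\vec q\cdot\nabla\overline u+\tfrac12\operatorname{div}(\vec q)\,\overline u$ and the boundary equation by $\vec q\cdot\nabla_\Gamma\overline u_\Gamma+\tfrac12\operatorname{div}(\vec q)\,\overline u_\Gamma$ (up to an overall sign), integrate, add, take imaginary parts, and handle the new $\Gamma_1$ terms with the surface divergence theorem, the interface conditions and the decomposition $\nabla u=\nabla_\Gamma u_\Gamma+(\pnu u)\nu$. Your additional remark on justifying the computation by density from the regularity of Proposition \ref{WP:prop:02} is a sound way to fill in the detail the paper omits.
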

\begin{proof} 
The proof of the Proposition \ref{identq} relies on the use of classical ideas concerning multiplier techniques.
 That is to say, we multiply the first equation of 
 \eqref{WP:problem:01} by $-\left(\vec{q}\cdot\nabla \overline{u} + \dfrac{1}{2} \text{div}(\vec{q})\overline{u}\right)$ and
 we integrate  in $\Omega\times (0,T)$; 
 similarly, we multiply the second one by $-\left(\vec{q}\cdot\nabla_\Gamma \overline{u}_\Gamma + \dfrac{1}{2} \text{div}(\vec{q})\overline{u}_\Gamma\right)$
and the result is integrated  on $\Gamma_1 \times (0,T)$. 
 Then we add the above identities and take the imaginary part. Contrary to the case of the homogeneous Dirichlet boundary conditions (see for instance \cite{machtyngier1994exact}), in our case several new boundary terms defined on $\Gamma_1\times (0,T)$ appear. To deal with them, we use the surface divergence theorem and the second equation of \eqref{WP:problem:01} to estimate $\pt u_\Gamma$ on $\Gamma_1\times (0,T)$. The rest of the proof runs as in \cite{lions1988controlabilite} and \cite{machtyngier1994exact} for wave and Schr\"odinger equation with Dirichlet boundary conditions, respectively. We omit the details.
\end{proof}

With this estimate at hand, we have the next result of hidden regularity for the Schr\"odinger equation with dynamic boundary conditions: 
\begin{proposition} \label{regesp}
	Consider the same assumptions of Proposition \ref{WP:prop:02}. Then, the derivative $\pnu u$ of solution of \eqref{WP:problem:01} belongs to $L^2(\partial\Omega\times (0,T))$. Moreover, there exists a positive constant $C$ such that
	\begin{equation} \label{RegNorm}
	\int_0^T \int_{\partial \Omega}  |\pnu u|^2 d\sigma dt 
	\leq 
	C\left( \| (g, g_\Gamma) \|_{L^1(0,T; \mathcal V)}^2 + \|(u_0, u_{\Gamma, 0}) \|_{\mathcal V}^2 
	\right).
	\end{equation} 
\end{proposition}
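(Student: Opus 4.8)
\quad The plan is to start from the multiplier identity \eqref{eq:prop:identity} of Proposition~\ref{identq}, specialize it to a vector field $\vec{q}$ adapted to the boundary, and then show that, apart from the leading boundary term, everything on its right-hand side is controlled by the $\mathcal V$-energy of $(u,u_\Gamma)$ through Proposition~\ref{WP:prop:02}. Since \eqref{eq:prop:identity} involves quantities ($\pnu u$, $\Delta u$) that are not a priori in $L^2$, I would first prove \eqref{RegNorm} for sufficiently regular data — say $(u_0,u_{\Gamma,0})\in\mathcal{V}^3$ and $(g,g_\Gamma)\in C^1([0,T];\mathcal V)$, a class dense in that of Proposition~\ref{WP:prop:02} — where all the integrations by parts behind Proposition~\ref{identq} are legitimate, and then pass to the limit: for general data, $\pnu u$ is \emph{defined} as the $L^2(\partial\Omega\times(0,T))$-limit provided by \eqref{RegNorm}, which is precisely the hidden regularity claimed.

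For the vector field I would take $\vec{q}\in C^2(\overline\Omega;\R^n)$ independent of $t$ (so $\pt\vec{q}\equiv0$, which makes the two terms of $Y$ containing $\pt\vec{q}$ vanish) with $\vec{q}\cdot\nu\geq c_0>0$ on $\partial\Omega$; such a field exists, e.g. a smooth approximation of an extension of the outward unit normal $\nu$. Then \eqref{eq:prop:identity} yields
\[
\tfrac12 d\,c_0\int_0^T\!\!\int_{\partial\Omega}|\pnu u|^2\,d\sigma dt\;\leq\;\tfrac12 d\int_0^T\!\!\int_{\partial\Omega}(\vec{q}\cdot\nu)\,|\pnu u|^2\,d\sigma dt\;=\;X+Y+F,
\]
so it remains to bound $X+Y+F$ by $C\big(\|(g,g_\Gamma)\|_{L^1(0,T;\mathcal V)}^2+\|(u_0,u_{\Gamma,0})\|_{\mathcal V}^2\big)=:C\mathcal E_0$ plus a small multiple of $\int_0^T\!\int_{\partial\Omega}|\pnu u|^2$.

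This is where Proposition~\ref{WP:prop:02} enters: it gives $(u,u_\Gamma)\in C^0([0,T];\mathcal V)$ with $\sup_{[0,T]}\|(u,u_\Gamma)(t)\|_{\mathcal V}^2+\int_0^T\|(u,u_\Gamma)(t)\|_{\mathcal V}^2\,dt\leq C\mathcal E_0$. Using the $L^\infty$ bounds on $\vec{q}$, $\vec{\nabla}\vec{q}$, $\vec{\rho}_1$, $\vec{\rho}_{\Gamma,1}$, $\rho_0$, $\rho_{\Gamma,0}$, the Poincar\'e inequality, the trace embedding built into the norm of $\mathcal V$, and at most one further integration by parts on the closed manifold $\Gamma_1$ to remove a tangential derivative, every term of $X$ and of $Y$ \emph{except} the two $\Gamma_1$-integrals that are linear in $\pnu u$ — namely $-\tfrac12 d\,\Re\,\IGT{(\vec{q}\cdot\nu)\,\overline{u}_\Gamma\,\pnu u}$ and $\tfrac12 d\,\Re\,\IGT{(\pnu u)\,\vec{q}\cdot\nabla_\Gamma u_\Gamma}$ — is bounded by $C\int_0^T\|(u,u_\Gamma)(t)\|_{\mathcal V}^2\,dt\leq C\mathcal E_0$; the time-endpoint terms of $Y$ are bounded by $C\sup_{[0,T]}\|(u,u_\Gamma)(t)\|_{\mathcal V}^2\leq C\mathcal E_0$; and $F$ is bounded, by Cauchy--Schwarz in $x$ then in $t$, by $C\|(g,g_\Gamma)\|_{L^1(0,T;\mathcal V)}\sup_{[0,T]}\|(u,u_\Gamma)(t)\|_{\mathcal V}\leq C\mathcal E_0$. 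For the two remaining terms, since $\Gamma_1\subset\partial\Omega$ and $|u_\Gamma|^2$, $|\nabla_\Gamma u_\Gamma|^2$ on $\Gamma_1$ are controlled by $\|(u,u_\Gamma)\|_{\mathcal V}^2$, Young's inequality gives, for every $\varepsilon>0$,
\[
\Big|\tfrac12 d\,\Re\,\IGT{(\vec{q}\cdot\nu)\,\overline{u}_\Gamma\,\pnu u}\Big|+\Big|\tfrac12 d\,\Re\,\IGT{(\pnu u)\,\vec{q}\cdot\nabla_\Gamma u_\Gamma}\Big|\leq\varepsilon\!\int_0^T\!\!\int_{\partial\Omega}|\pnu u|^2\,d\sigma dt+C_\varepsilon\!\int_0^T\!\|(u,u_\Gamma)(t)\|_{\mathcal V}^2\,dt.
\]
Collecting everything, choosing $\varepsilon<\tfrac14 d\,c_0$ and absorbing the $\varepsilon\int_0^T\!\int_{\partial\Omega}|\pnu u|^2$ contribution into the left-hand side yields \eqref{RegNorm}; the density argument from the first paragraph then extends it to all data in the class of Proposition~\ref{WP:prop:02}.

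The step I expect to be the main obstacle is the analysis of the boundary integrals on the inner boundary $\Gamma_1$, which are genuinely new compared with the pure Dirichlet case: one must check that the factor $d$ in front of $\int_{\partial\Omega}|\pnu u|^2$ on the left is actually available to absorb the two $\pnu u$-linear terms, and — more delicately — that the contributions coming from the tangential Laplacian and from $\pt u_\Gamma$ (rewritten via the second equation of \eqref{WP:problem:01}) are truly of lower order, i.e. can be bounded by the $\mathcal V$-norm and do not secretly require control of $\pnu u$ or of second tangential derivatives of $u_\Gamma$.
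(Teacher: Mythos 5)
Your proposal is correct and follows essentially the same route as the paper: the paper also specializes the identity of Proposition \ref{identq} to a time-independent field $\vec{q}\in C^2(\overline\Omega;\R^n)$ with $\vec{q}=\nu$ on $\partial\Omega$ (so $\vec q\cdot\nu\equiv 1$, a special case of your $\vec q\cdot\nu\geq c_0$), bounds $X$ and $F$ by the $\mathcal V$-energy via Proposition \ref{WP:prop:02} and Cauchy--Schwarz, and absorbs the $\pnu u$-linear terms of $Y$ with Young's inequality into a $\tfrac14 d\|\pnu u\|_{L^2(\partial\Omega\times(0,T))}^2$ contribution, exactly as you do. Your preliminary regularization/density step is a sensible explicit addition that the paper leaves implicit.
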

\begin{proof}
	We start choosing a smooth vector field $\vec{q} \in C^2(\overline \Omega, \R^n)$ such that $q = \nu$ on $\partial \Omega$. See, for instance, \cite{lions1988controlabilite} for the proof of the existence of such a vector field. Then, by Proposition \ref{identq}, equation \eqref{eq:prop:identity} reads
	\begin{align}
	\label{prop:hidden:01}
		\dfrac{1}{2}d \|\pnu u\|_{L^2(0,T;L^2(\partial \Omega))}^2 = X+Y+F.
	\end{align}
	Since $(p,p_\Gamma)$ and $\vec{q}$ and its derivatives are bounded in $L^\infty$, we have that
	\begin{align}
		\nonumber 
		|X|\leq &C\|(u,u_\Gamma)\|_{C^0([0,T]; \mathcal{V})}\\
		\label{prop:hidden:02}
		\leq C &\left(\|(u_0,u_{\Gamma,0})\|_{\mathcal{V}}^2 + C\|(g,g_\Gamma)\|_{L^1(0,T;\mathcal{V})}^2 \right), 
	\end{align}
	where we have applied the Proposition \ref{WP:prop:02} in the last inequality. In the same manner, by Young's inequality, $Y$ can be estimated as
	\begin{align}
	\label{prop:hidden:03}
		|Y|\leq C \|(u_0,u_{\Gamma,0})\|_{\mathcal{V}}^2 + \|(g,g_\Gamma)\|_{L^1(0,T;\mathcal{V})}^2 + \dfrac{1}{4}d \|\pnu u\|_{L^2(0,T;L^2(\partial \Omega))}^2.
	\end{align}
	Moreover, by Cauchy-Scharwz inequality we can assert that
	\begin{align}
	\label{prop:hidden:04}
		|F|\leq C \|(u_0,u_{\Gamma,0})\|_{\mathcal{V}}^2 + C\|(g,g_\Gamma)\|_{L^1(0,T;\mathcal{V})}^2.
	\end{align}
	Substituting \eqref{prop:hidden:02}, \eqref{prop:hidden:03} and \eqref{prop:hidden:04} into \eqref{prop:hidden:01}, we obtain \eqref{RegNorm}. This finishes the proof of the Proposition \ref{regesp}.
\end{proof}

\subsection{Solutions by transposition}

To define solutions of the Schr\"odinger equation with dynamic boundary condition and boundary control \eqref{simplified:problem:01}, we consider  the auxiliary system given by
\begin{align}
\label{adj1} 
\begin{cases}
i\pt \phi +d\Delta \phi +i\vec{r}\cdot \nabla \phi +q\phi = f,&\text{ in }\Omega\times (0,T),\\
i\pt \phi_\Gamma -d\pnu \phi  +\delta \Delta_\Gamma \phi_\Gamma +i\vec{r}_{\Gamma}\cdot \nabla_\Gamma \phi_\Gamma +q_\Gamma \phi_\Gamma = f_\Gamma,&\text{ on }\Gamma_1\times (0,T),\\
\phi=\phi_\Gamma,&\text{ on }\Gamma_1\times (0,T),\\
\phi=0,&\text{ on }\Gamma_0\times (0,T),\\
(\phi,\phi_\Gamma)(T, \cdot)=(\phi_T, \phi_{\Gamma,T}),&\text{ in }\Omega\times \Gamma_1,
\end{cases}
\end{align}
where the pair $(q,q_\Gamma)$ is given by
\begin{align}
	q=i\text{div}(\vec{r}) +\overline{q}_0,\quad q_\Gamma =i\text{div}_\Gamma (\vec{r}_\Gamma)-i\vec{r}\cdot \nu +\overline{q}_{\Gamma,0}.
\end{align}

According to Proposition \ref{WP:prop:02} we have that, for 
each $(\phi_T, \phi_{\Gamma,T}) \in {\mathcal V} $ and  $(f, f_\Gamma)  \in L^1(0,T; {\mathcal V})$, system \eqref{adj1} posses a  unique solution 
\begin{equation} \label{condin}
(\phi, \phi_\Gamma)  \in C([0,T]; {\mathcal V}).
\end{equation}
We can now define a solution by transposition.
\begin{definition}
	Given $(y_0,{y_\Gamma}_0)\in \mathcal{V}'$ and
	a control $h \in  {L^2(\Gamma_*\times (0,T))}$, we will say that  {the pair} $(y, y_\Gamma) \in C([0,T]; {\mathcal V}')$ is a solution of system \eqref{intro:problem:01} in the sense of transposition if the following identity holds
	\begin{equation} \label{transp}
	\int_0^T \left< (y,y_\Gamma),  (f, f_\Gamma) \right>_{\mathcal{V}',\mathcal{V}}dt  
	=
	d\int_0^T \int_{\Gamma_0} \mathbbm{1}_{\Gamma_*}h  \overline {\pnu \phi} d\sigma dt
	+i  \left< (y_0,{y_\Gamma}_0),  (\phi, \phi_\Gamma)(0, \cdot) \right>_{\mathcal{V}',\mathcal{V}}
	\end{equation}
	for each $(f, f_\Gamma)  \in L^1(0,T; {\mathcal V})$,
	where $(\phi, \phi_\Gamma)$   is the solution of \eqref{adj1} with  $(\phi_T, \phi_{\Gamma, T}) = (0,0)$.
\end{definition}
\begin{theorem}
	For each $(y_0, y_{\Gamma,0}) \in {\mathcal V}'$ and $ {h \in L^2(\Gamma_* \times (0,T))}$, system \eqref{intro:problem:01} has a unique solution
	$(y,y_\Gamma) \in C([0,T]; {\mathcal V}')$. Moreover, there exists $C>0$ such that
	\begin{equation*}
	\|(y,y_\Gamma) \|_{L^\infty(0,T; \mathcal{V}')} \leq C \left( \|(y_0, y_{\Gamma,0}) \|_{\mathcal{V}'} + \|h\|_{ {L^2(\Gamma_*\times (0,T))}}  \right).
	\end{equation*}
\end{theorem}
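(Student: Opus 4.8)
The plan is to construct the transposition solution by a duality (Riesz representation) argument, using the well-posedness and hidden-regularity results already established for the backward system \eqref{adj1}. First I would fix $(y_0,y_{\Gamma,0})\in\mathcal V'$ and $h\in L^2(\Gamma_*\times(0,T))$ and, for each $(f,f_\Gamma)\in L^1(0,T;\mathcal V)$, let $(\phi,\phi_\Gamma)$ be the unique solution of \eqref{adj1} with terminal data $(\phi_T,\phi_{\Gamma,T})=(0,0)$, which belongs to $C([0,T];\mathcal V)$ by Proposition \ref{WP:prop:02} (after noting that the coefficients $(q,q_\Gamma)$ built from $\vec r,\vec r_\Gamma,q_0,q_{\Gamma,0}$ satisfy the hypotheses of that proposition). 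Define the linear functional
\begin{align*}
\Lambda(f,f_\Gamma):= d\int_0^T\int_{\Gamma_0}\mathbbm 1_{\Gamma_*} h\,\overline{\pnu\phi}\,d\sigma dt + i\left<(y_0,y_{\Gamma,0}),(\phi,\phi_\Gamma)(0,\cdot)\right>_{\mathcal V',\mathcal V}.
\end{align*}
The key estimate is that $\Lambda$ is continuous on $L^1(0,T;\mathcal V)$: by Proposition \ref{regesp} (hidden regularity applied to the time-reversed system, so that the roles of initial/terminal data are swapped) one has
\begin{align*}
\|\pnu\phi\|_{L^2(\partial\Omega\times(0,T))}^2 + \|(\phi,\phi_\Gamma)(0,\cdot)\|_{\mathcal V}^2 \leq C\,\|(f,f_\Gamma)\|_{L^1(0,T;\mathcal V)}^2,
\end{align*}
and combining this with Cauchy--Schwarz gives $|\Lambda(f,f_\Gamma)|\leq C(\|h\|_{L^2(\Gamma_*\times(0,T))}+\|(y_0,y_{\Gamma,0})\|_{\mathcal V'})\|(f,f_\Gamma)\|_{L^1(0,T;\mathcal V)}$.

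Next, since $L^1(0,T;\mathcal V)$ has dual $L^\infty(0,T;\mathcal V')$ (here I would use that $\mathcal V$ is a separable Hilbert space, so the Radon--Nikodym property holds and $(L^1(0,T;\mathcal V))'=L^\infty(0,T;\mathcal V')$), the functional $\Lambda$ is represented by a unique element $(y,y_\Gamma)\in L^\infty(0,T;\mathcal V')$, which is by definition the transposition solution, and the norm bound $\|(y,y_\Gamma)\|_{L^\infty(0,T;\mathcal V')}=\|\Lambda\|\leq C(\|(y_0,y_{\Gamma,0})\|_{\mathcal V'}+\|h\|_{L^2(\Gamma_*\times(0,T))})$ is exactly the claimed estimate. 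Uniqueness is immediate: if $(y,y_\Gamma)$ satisfies \eqref{transp} with $h=0$ and $(y_0,y_{\Gamma,0})=0$, then it annihilates all of $L^1(0,T;\mathcal V)$ and hence vanishes.

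The remaining point — and the one I expect to require the most care — is upgrading the regularity from $L^\infty(0,T;\mathcal V')$ to $C([0,T];\mathcal V')$ and making sense of the pointwise value $(y(T),y_\Gamma(T))$ used in the controllability statements. The standard route is to first treat smooth data: for $(y_0,y_{\Gamma,0})$ and $h$ regular enough that \eqref{intro:problem:01} has a strong solution (via Proposition \ref{WP:prop:02} applied to a lifted, inhomogeneous-Dirichlet formulation), one checks by integration by parts that the strong solution satisfies \eqref{transp}, hence coincides with the transposition solution; in particular the transposition solution is continuous in time for smooth data. Then one passes to the limit using the continuity estimate above together with density of smooth data in $\mathcal V'\times L^2(\Gamma_*\times(0,T))$, obtaining $(y,y_\Gamma)\in C([0,T];\mathcal V')$ for general data, with the evaluation map $t\mapsto (y(t),y_\Gamma(t))$ continuous. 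This last step is routine in the Dirichlet case but here requires carefully accounting for the dynamic boundary term when integrating by parts, as in Proposition \ref{identq}; I would simply invoke the analogous computation rather than redo it.
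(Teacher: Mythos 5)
Your proposal is correct and follows essentially the same route as the paper: define the linear functional on $L^1(0,T;\mathcal V)$ via the backward system \eqref{adj1}, establish its continuity from Proposition \ref{regesp} together with the bound \eqref{condin}, represent it by duality as an element of $L^\infty(0,T;\mathcal V')$, and upgrade to $C([0,T];\mathcal V')$ by density of smooth data. The paper compresses the continuity-in-time step into the phrase ``classical density arguments,'' which is exactly the argument you spell out in your final paragraph.
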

\begin{proof}
	Given
	$(y_0,{y_\Gamma}_0)\in \mathcal{V}'$ and  ${h \in L^2( \Gamma_* \times (0,T) ) }$, 
	from Proposition \ref{regesp} and \eqref{condin}, the linear functional defined by
	\begin{equation*}
	(f,f_\Gamma) \in L^1(0,T; \mathcal V) \mapsto d 
	\int_0^T \int_{\Gamma_0} \mathbbm{1}_{\Gamma_*} h  \overline {\pnu \phi} d\sigma dt
	+i  \left< (y_0,{y_\Gamma}_0),  (\phi(0, \cdot), \phi_\Gamma(0, \cdot)) \right>_{\mathcal{V}',\mathcal{V}}
	\end{equation*}
	is well defined and continuous. By duality, there exists 
	$(y,y_\Gamma) \in L^\infty(0,T; \mathcal V')$ such that we have \eqref{transp}. 
	By classical density arguments we get that $(y,y_\Gamma) \in C([0,T]; \mathcal{V}')$. 
\end{proof}

Now we will consider the problem of exact controllability. By linearity, it is enough to study the case of 
null initial conditions. It is direct to see that $h$ is a control for system \eqref{intro:problem:01} driving $(y,y_\Gamma) =(0,0)$ 
to $(y(T, \cdot) ,{y_\Gamma}(T, \cdot)) \in \mathcal{V}'$ if
\begin{equation} \label{exactc}
d\int_0^T \int_{\Gamma_0} \mathbbm{1}_{\Gamma_*}h  \overline {\pnu \phi} d\sigma dt
=
i \left< (y(T, \cdot) ,{y_\Gamma}(T, \cdot)),  (\phi_T, \phi_{\Gamma,T}) \right>_{\mathcal{V}',\mathcal{V}}
\end{equation}
for each $ (\phi_T, \phi_{\Gamma, T})  \in {\mathcal V}$,
where $(\phi, \phi_\Gamma) $   is the solution of \eqref{adj1} with  $(f, f_\Gamma)  = (0,0)$. Therefore we get, by classical abstract results  {(see for instance \cite{tucsnak2009observation})}, the following characterization. 
\begin{proposition}
	\label{prop:exact:controllability}
	System \eqref{intro:problem:01} is exactly controllable in time $T>0$ if and only if there exists a constant $C>0$ such that
	\begin{equation} \label{ObsIneq}
	\| (\phi_T, \phi_{\Gamma,T}) \|_{\mathcal V}^2 \leq C \int_0^T\int_{\Gamma_*}{|\pnu \phi|^2} d\sigma dt,
	\end{equation}
	for all  $ (\phi_T, \phi_{\Gamma, T})  \in {\mathcal V}$.
\end{proposition}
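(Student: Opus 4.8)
The plan is to recognize \eqref{ObsIneq} as the observability inequality dual to the surjectivity of the control-to-state map, and then to invoke the classical Hilbert Uniqueness Method / closed-range duality. By linearity of \eqref{intro:problem:01} it suffices to treat null initial data $(y_0,y_{\Gamma,0})=(0,0)$: exact controllability at time $T$ is then exactly the assertion that the bounded linear map
\[
\mathcal L : L^2(\Gamma_*\times(0,T)) \to \mathcal V', \qquad \mathcal L h = (y(T,\cdot),y_\Gamma(T,\cdot)),
\]
where $(y,y_\Gamma)$ denotes the transposition solution of \eqref{intro:problem:01} with vanishing initial data, is \emph{onto}. The well-posedness result established just above (the theorem preceding \eqref{exactc}) guarantees that $\mathcal L$ is well defined and bounded.

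First I would identify the adjoint $\mathcal L^\ast$ explicitly. Using the pivot space $\mathcal H$ to identify $(\mathcal V')'\cong\mathcal V$ compatibly with the duality pairing $\langle\cdot,\cdot\rangle_{\mathcal V',\mathcal V}$, the characterization \eqref{exactc} states precisely that for every $h\in L^2(\Gamma_*\times(0,T))$ and every $(\phi_T,\phi_{\Gamma,T})\in\mathcal V$,
\[
\langle \mathcal L h,\,(\phi_T,\phi_{\Gamma,T})\rangle_{\mathcal V',\mathcal V}
= -\,i\, d\int_0^T\int_{\Gamma_*} h\,\overline{\pnu\phi}\,d\sigma\,dt ,
\]
where $(\phi,\phi_\Gamma)$ is the solution of the adjoint system \eqref{adj1} with $(f,f_\Gamma)=(0,0)$ and terminal datum $(\phi_T,\phi_{\Gamma,T})$. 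Reading the right-hand side as the $L^2(\Gamma_*\times(0,T))$ scalar product in the variable $h$, this determines $\mathcal L^\ast(\phi_T,\phi_{\Gamma,T})$ and shows that $|\mathcal L^\ast(\phi_T,\phi_{\Gamma,T})| = d\,|\pnu\phi|$ a.e.\ on $\Gamma_*\times(0,T)$; note that Proposition \ref{regesp} (hidden regularity) guarantees $\pnu\phi\in L^2(\partial\Omega\times(0,T))$, so the formula for $\mathcal L^\ast$ is meaningful (its boundedness also being automatic from that of $\mathcal L$). Hence
\[
\|\mathcal L^\ast(\phi_T,\phi_{\Gamma,T})\|_{L^2(\Gamma_*\times(0,T))}^2 = d^2\int_0^T\int_{\Gamma_*}|\pnu\phi|^2\,d\sigma\,dt .
\]

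Then I would close the argument with the classical duality lemma for bounded operators between Hilbert spaces (e.g.\ \cite{tucsnak2009observation}, or the bounded-below characterization of surjectivity via the closed range theorem): a bounded linear map $\mathcal L$ is onto if and only if there is $c>0$ with $\|\mathcal L^\ast\Phi\|\ge c\|\Phi\|$ for all $\Phi$. Combining this with the norm identity above gives that \eqref{intro:problem:01} is exactly controllable at time $T$ iff there is $C>0$ with $\|(\phi_T,\phi_{\Gamma,T})\|_{\mathcal V}^2\le C\int_0^T\int_{\Gamma_*}|\pnu\phi|^2\,d\sigma\,dt$ for all $(\phi_T,\phi_{\Gamma,T})\in\mathcal V$, i.e.\ \eqref{ObsIneq} after absorbing the constant $d^2$. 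As an alternative for the ``$\Leftarrow$'' implication one may argue constructively, minimizing over $\mathcal V$ the HUM functional $J(\phi_T,\phi_{\Gamma,T}) = \tfrac{d}{2}\int_0^T\int_{\Gamma_*}|\pnu\phi|^2\,d\sigma\,dt - \Re\big(i\langle(y_T,y_{\Gamma,T}),(\phi_T,\phi_{\Gamma,T})\rangle_{\mathcal V',\mathcal V}\big)$, whose coercivity is precisely \eqref{ObsIneq} and whose unique minimizer yields the control via the adjoint formula.

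The only delicate points are bookkeeping: matching the sesquilinear pairings (the factor $i$ and the conjugate appearing in \eqref{transp}--\eqref{exactc}) so that $\mathcal L^\ast$ is correctly identified, and justifying the pivot identification $(\mathcal V')'\cong\mathcal V$ used to phrase \eqref{ObsIneq} with the $\mathcal V$-norm on the left. There is no analytic obstacle at this stage; the substantive content of the controllability statement lies in proving \eqref{ObsIneq} itself, which is done from the Carleman estimate of Theorem \ref{Thm:Carleman} in Section \ref{section:proof:thm:controllability:boundary}.
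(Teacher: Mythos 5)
Your argument is correct and follows the same route as the paper, which simply invokes the classical duality characterization of surjectivity (citing \cite{tucsnak2009observation}) applied to the control-to-state map after reducing to null initial data via \eqref{exactc}; you have merely written out the identification of $\mathcal L^\ast$ and the bounded-below criterion that the paper leaves implicit. The bookkeeping of the factor $-id$ and the pivot identification $(\mathcal V')'\cong\mathcal V$ is handled correctly, so nothing is missing.
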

\section{Proofs of  Theorem \ref{Thm:Carleman} and Corollary \ref{Corollary:Carleman}}
\label{section:proof:carleman}
In order to prove these results, we point out that it is sufficient to prove inequalities  \eqref{Carleman:estimate:01} and \eqref{Carleman:estimate:distributed} for functions $(v,v_\Gamma)\in C^\infty((\overline{\Omega}\times \Gamma_1 ) \times  [0,T])$ which satisfies $v=0$ on $\Gamma_0\times (0,T)$ and $v=v_\Gamma$ on $\Gamma_1\times (0,T)$ {and argue by density as usual}. Then, {due to the regularity of each function and the fact that $v=v_\Gamma$}, from now on we shall write $v$ instead of $v_\Gamma$ on $\partial \Omega\times (0,T)$.

\subsection{Proof  of  Theorem \ref{Thm:Carleman}}

Here we will  prove the Carleman estimate of Theorem \ref{Thm:Carleman}. In order to do that, we shall follow the classical conjugation for suitable operators involving the Schr\"odinger equation with dynamic boundary conditions. However, contrary to the case of Dirichlet boundary conditions, several additional  boundary terms arise from the dynamic boundary conditions of Wentzell type. To treat them, we have to repeat some steps from \cite{baudouin2008inverse} in a modified form.  

In order to simplify the presentation, the proof of Theorem \ref{Thm:Carleman} is done in several steps.

\noindent $\bullet$ {\bf Step 1: Setting.}

In this step, we define the operators to conjugate in $\Omega\times (0,T)$ and on $\Gamma_1\times (0,T)$. To do this, we write $v=e^{s\varphi}w$ and compute $Pw=e^{-s\varphi} L(e^{s\varphi}w)$ in $\Omega\times (0,T)$ and $Qw=e^{-s\varphi}N(e^{s\varphi}w)$ on $\Gamma_1\times (0,T)$. 

Firstly, we write the operator $P$ as follows
\begin{align}
\label{eq:P}
Pw=P_1 w+P_2 w+Rw,\quad \text{ in }\Omega\times (0,T),
\end{align}  
where the operators $P_1$, $P_2$ and $R$ are given by
\begin{align*}
P_1 w=ds^2 |\nabla \varphi|^2 w+d\Delta w +i\pt w,\quad P_2 w=ds\Delta \varphi w +2ds\nabla \varphi \cdot \nabla w +is\pt \varphi w,
\end{align*}
and 
\begin{align*}
Rw =\vec{q}_1\cdot \nabla w + s\nabla \varphi \cdot \vec{q}_1 w+ q_0w.
\end{align*}

Secondly, we write
\begin{align}
\label{eq:Q}
Qw=Q_1 w+Q_2 w+R_\Gamma w,\text{ on }\Gamma_1\times (0,T),
\end{align}
where $Q_1, Q_2$ and $R_\Gamma$ are given by
\begin{align*}
Q_1 w=\delta \Delta_\Gamma w +i\pt w,\quad Q_2  w=-ds \pnu \varphi w +is \pt \varphi w,
\end{align*}
and
\begin{align*} 
R_\Gamma w=-d\pnu w +\vec{q}_{\Gamma,1}\cdot \nabla_\Gamma w + s\nabla_\Gamma \varphi \cdot \vec{q}_{\Gamma,1} w+ q_{\Gamma,0} w.
\end{align*}

Now, taking the $L^2(\Omega\times (0,T))$-norm in \eqref{eq:P} and the $L^2(\Gamma_1 \times (0,T))$-norm in \eqref{eq:Q} we get
\begin{align}
\label{after:conjug}
\begin{split} 
&\IOT{|Pw-Rw|^2}+\IGT{|Qw-R_\Gamma w|^2}\\
=& \IOT{\left(|P_1 w|^2+ |P_2 w|^2 \right)}+ \IGT{\left( |Q_1 w|^2+ |Q_2 w|^2 \right) }\\
&+ 2 \Re \IOT{P_1 w\overline{P_2 w}}+2 \Re \IGT{Q_1 w \overline{Q_2 w}}.
\end{split}
\end{align}

The next two steps of the proof are devoted to compute the last two terms of the right-hand side of \eqref{after:conjug}.

\noindent $\bullet$ {\bf Step 2: Estimates in $\Omega\times (0,T)$.}

In this step, we compute the terms 
\begin{align*}
\Re \IOT{P_1 w \overline{P_2 w}}=\sum_{j=1}^3 \sum_{k=1}^3 I_{jk},
\end{align*}
where $I_{jk}$ denotes the $L^2$ real inner product of the $j^{\text{th}}-$term of $P_1w$ 
with the  $k^{\text{th}}$-term of $P_2w$. In order to do that, we shall take into account some properties of the function $\mu$ defined in \eqref{Mink}, 

which are stated in the following result. 
\begin{proposition}
Under the hypotheses of Theorem \ref{Thm:Carleman}, 
the function $\mu$ 
defined by \eqref{Mink} satisfies the following properties:
\begin{enumerate}
	\item $\mu\in C^4(\overline{\Omega})$.
	\item $\mu = 1$ on $\Gamma_1$. 
	\item $\nabla \mu \neq 0$ in $\overline{\Omega}$.
	\item There exists $c > 0$ such that $D^2 \mu (\xi, \xi) \geq c |\xi|^2$  in $\overline{\Omega}$ for all $\xi \in \mathbb{R}^n$.  
\end{enumerate}
\end{proposition}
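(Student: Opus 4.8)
The plan is to read all four properties off the positive $1$-homogeneity of the gauge $\mu$ together with the geometry of $\partial\Omega_1$, reducing everything to a radial representation built from the diffeomorphism $\Phi$ in \eqref{regmu}. First I would record that, directly from \eqref{Mink}, $\mu(tx)=t\mu(x)$ for $t>0$. Using $\Phi$ I write $\partial\Omega_1=\{\rho(\omega)\omega:\omega\in S^{n-1}\}$ with radial function $\rho(\omega)=|\Phi^{-1}(\omega)|$, which is $C^4$ and bounded below by $\dist(0,\Gamma_1)>0$. Since $x\in\mu(x)\Omega_1$ forces $x/\mu(x)\in\partial\Omega_1$, one obtains the closed form $\mu(x)=|x|/\rho(x/|x|)$ on $\R^n\setminus\{0\}$. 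As $0\in\Omega_1$, the set $\overline\Omega$ is bounded away from the origin, so this formula shows $\mu\in C^4(\overline\Omega)$, giving property (1). Property (2) is immediate from the definition: $x\in\Gamma_1=\partial\Omega_1$ means $x\in 1\cdot\partial\Omega_1$, hence $\mu(x)=1$. For (3) I would differentiate the homogeneity identity (Euler's relation) to get $\nabla\mu(x)\cdot x=\mu(x)$; since $\mu(x)>0$ on $\overline\Omega$ this forces $\nabla\mu(x)\neq 0$, so $\mu$ has no critical point away from the origin.

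The heart of the statement is (4), and the plan is to relate $D^2\mu$ to the curvature of the level sets $\{\mu=r\}=r\,\partial\Omega_1$. Nonnegativity $D^2\mu\geq 0$ follows because $\mu$ is the gauge of the convex body $\Omega_1$, hence convex. For the uniform lower bound I would differentiate the Euler relation once more to obtain $D^2\mu(x)\,x=0$; thus on each level set the tangent space $T_x\{\mu=r\}$ carries the whole nontrivial action of $D^2\mu$, and the restriction of $D^2\mu$ to $T_x\{\mu=r\}$ equals, up to the positive factor $|\nabla\mu|$, the second fundamental form of that level set. Because $\Omega_1$ is strongly convex, all principal curvatures of $\partial\Omega_1$ are strictly positive; by scaling the same holds on $r\,\partial\Omega_1$, and compactness of $\overline\Omega$ (where $\mu$ ranges over a compact subinterval of $(0,\infty)$ and $|\nabla\mu|\geq\mu/|x|\geq c>0$) yields a uniform $c>0$ with $D^2\mu(x)(\xi,\xi)\geq c|\xi|^2$ for every $\xi$ tangent to the level set through $x$.

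I expect the main obstacle to be precisely the radial direction, and it is here that the estimate in (4) must be handled with care. Since $\mu$ is $1$-homogeneous the computation above gives $D^2\mu(x)\,x=0$, so the quadratic form $D^2\mu(x)(\cdot,\cdot)$ has $x$ in its kernel and cannot dominate $|\xi|^2$ along the radial component; consequently the bound in (4) is genuinely a \emph{transverse} strong-convexity statement, valid for all $\xi$ in the tangent space of the level set, which is the direction actually exploited in the Carleman estimate. The clean, nondegenerate consequence that the proof of Theorem \ref{Thm:Carleman} uses is the uniform convexity of the weight $\psi=\mu^2$: from $D^2\psi=2\,\nabla\mu\otimes\nabla\mu+2\mu\,D^2\mu$ the degenerate radial direction of $D^2\mu$ is exactly compensated by the rank-one term $2\,\nabla\mu\otimes\nabla\mu$, which is positive along $x$ because $\nabla\mu\cdot x=\mu>0$. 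Hence I would carry the tangential curvature bound through the chain rule to conclude $D^2\psi(x)(\xi,\xi)\geq c|\xi|^2$ for \emph{every} $\xi\in\R^n$, the global convexity property required downstream.
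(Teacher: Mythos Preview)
Your treatment of (1)--(3) is essentially the paper's: the same radial formula $\mu(x)=|x|/\rho(x/|x|)$ built from $\Phi^{-1}$ is used for regularity, and (2) is read off the definition. Your Euler-relation argument for (3) is in fact more explicit than the paper's proof, which simply cites \cite{baudouin2008inverse} and \cite{BMO} for both (3) and (4).

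Regarding (4), you have put your finger on a genuine issue with the statement as written. The $1$-homogeneity of the Minkowski gauge indeed gives $D^2\mu(x)\,x=0$, so $D^2\mu(x)$ is degenerate in the radial direction and the inequality $D^2\mu(\xi,\xi)\geq c|\xi|^2$ cannot hold for all $\xi\in\R^n$. The paper's proof does not confront this; it defers entirely to \cite{baudouin2008inverse} and \cite{BMO}. What those references deliver, and what the Carleman computation in Section~\ref{section:proof:carleman} actually consumes (see the occurrences of $\nabla^2\psi(\nabla\psi,\nabla\psi)$ and $\nabla^2\psi(\nabla w,\nabla\overline w)$ in \eqref{I1k} and \eqref{I22}), is the uniform positivity of $D^2\psi$ with $\psi=\mu^2$. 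Your decomposition $D^2\psi=2\nabla\mu\otimes\nabla\mu+2\mu\,D^2\mu$ is exactly the mechanism: the rank-one term supplies the missing radial coercivity (via $\nabla\mu\cdot x=\mu>0$), while strong convexity of $\Omega_1$ gives the tangential lower bound on $D^2\mu$ through the second fundamental form of the level sets. So your reading of (4) as a transverse estimate for $\mu$, upgraded to a full estimate for $\psi$, is the correct interpretation and matches what is used downstream.
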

\begin{proof}
We recall that  the origin belongs to the open set $\Omega_1$, and then,
there exists $\varepsilon>0$  such that 
$\overline{B_\varepsilon(0)} \cap \overline \Omega = \emptyset $. 
Taking into account hypothesis \eqref{regmu}, it is not difficult to see that, for each $x \in \Omega$, we have
$$ \mu(x) = \frac{\|x\|}{\left\| \Phi^{-1}\left(\frac{x}{\|x\|} \right)\right\|},
$$ 
and then, we 
obtain that $\mu$ is of class $C^4$ in $\overline \Omega$.
Moreover, directly from the definition of $\mu$ we get that $\Gamma_1 = \{ x \in \overline \Omega \,:\, \mu(x) =1\}$. 
Finally, assertions   3 and 4 
are proved in  \cite{baudouin2008inverse} (see also \cite{BMO}).
\end{proof}

In the following, we consider 
the following computations
\begin{align}
\label{estimate:der:weights:01}
\nabla \varphi=-\lambda \theta \nabla \psi,\quad \nabla \theta=\lambda \theta \nabla \psi, \quad \Delta \varphi =-\lambda^2\theta |\nabla \psi|^2 -\lambda \theta \Delta \psi,\text{ in }\Omega\times (0,T).
\end{align}

Moreover, 
\begin{align}
\label{estimate:der:weights:02}
\pnu \varphi=-\lambda \theta \pnu \psi ,\text{ on }\partial \Omega\times (0,T),\quad \text{ and } \nabla_\Gamma \theta=\nabla_\Gamma \varphi=0,\text{ on }\Gamma_1\times (0,T).
\end{align}

Here and subsequently, $C$ stands for a positive constant which is independent of the parameters $\lambda>0$ and $s>0$ and that might  change from line to line in our computations.

According to 
\eqref{estimate:der:weights:01}, the term $I_{11}$ can be written as

\begin{align}
\label{I11}  
\begin{split} 
I_{11}=&d^2 s^3 \IOT{\Delta \varphi |\nabla \varphi|^2 |w|^2}\\
=&-d^2s^3 \lambda^3 \IOT{(\lambda |\nabla \psi|^2 +\Delta \psi) |\nabla \psi|^2 \theta^3 |w|^2}.
\end{split} 
\end{align}

Also, we have
\begin{align} 
\nonumber
I_{12}=
& d^2 s^3  \IOT{|\nabla \varphi|^2  \nabla \varphi \cdot \nabla | w|^2 }\\
\nonumber
=&-d^2 s^3 \IOT{\nabla (|\nabla \varphi|^2)\cdot \nabla \varphi |w|^2}-d^2 s^3 \IOT{\Delta \varphi |\nabla \varphi|^2 |w|^2}\\
\nonumber
&+d^2 s^3 \IGT{|\nabla \varphi|^2 \pnu \varphi |w|^2} \\  
\nonumber
=&3d^2 s^3\lambda^4 \IOT{|\nabla \psi|^4 \theta^3 |w|^2} -d^2 s^3\lambda^3 \IGT{(\pnu \psi )^3 \theta^3 |w|^2}\\
&+d^2s^3\lambda^3 \IOT{(2\nabla^2 \psi(\nabla \psi,\nabla \psi)+\Delta \psi |\nabla \psi|^2)\theta^3 |w|^2},
\label{I12} 
\end{align}

%
where we have used that $w=0$ on $\Gamma_0\times (0,T)$. Now, the term $I_{13}$ is
\begin{align}
\label{I13} 
I_{13}= - ds^3 \Re i \IOT{\pt \varphi |\nabla \varphi|^2 |w|^2}=ds^3 \Im \IOT{\pt \varphi |\nabla \varphi|^2 |w|^2}=0.
\end{align}

Adding up the equations \eqref{I11}, \eqref{I12} and \eqref{I13} we get
\begin{align}
\nonumber 
\sum_{k=1}^3 I_{1k}=&2d^2 s^3 \lambda^4 \IOT{|\nabla \psi|^4 \theta^3|w|^2}\\
\nonumber 
&+2d^2 s^3\lambda^3 \IOT{\nabla^2 \psi (\nabla \psi,\nabla \psi)\theta^3 |w|^2}\\
\nonumber 
&-d^2 s^3\lambda^3 \IGT{( \pnu \psi)^3 \theta^3 |w|^2}\\
\label{I1k} 
\geq & 2d^2 \gamma^4 s^3\lambda^4 \IOT{\theta^3|w|^2} +d^2\gamma^3 s^3\lambda^3 \IGT{\theta^3 |w|^2}.
\end{align}	

On the other hand,
\begin{align*}
I_{21}=&d^2 s\Re \IOT{\Delta \varphi \overline{w} \Delta w}\\
=&-d^2 s\IOT{\Delta \varphi |\nabla w|^2}
- \frac{d^2 s}{2} \IOT{ \nabla (\Delta \varphi)\cdot \nabla |w|^2} \\
&+d^2 s\Re \IGT{\Delta \varphi \overline{w}\pnu w} \\
=&-d^2 s \IOT{\Delta \varphi |\nabla w|^2} + \dfrac{d^2s}{2}\IOT{\Delta^2 \varphi |w|^2}\\
&-\dfrac{d^2}{2}s\IGT{\pnu (\Delta \varphi) |w|^2}+d^2 s\Re \IGT{\Delta \varphi \overline{w}\pnu w}.
\end{align*}



Now, explicit computations show that
\begin{align*}
|\Delta^2\varphi|\geq C\lambda^4 \theta,\text{ in }\Omega\times (0,T),\quad \text{ and }|\pnu (\Delta \varphi)|\geq C\lambda^3\theta,\text{ on }\Gamma_1\times (0,T).
\end{align*}

According to these estimates, $I_{21}$ can be bounded in the following way
\begin{align}
\label{I21} 
\begin{split} 
I_{21}\geq &s \IOT{\left(\lambda^2 \theta |\nabla \psi|^2 +\lambda \theta \Delta \psi \right) |\nabla w|^2} -Cs\lambda^4 \IOT{\theta |w|^2}\\
&-Cs\lambda^2 \IGT{\theta |w|\pnu w}-Cs\lambda^3 \IGT{\theta |w|^2}.
\end{split}
\end{align}

On the other hand, 
\begin{align*}
I_{22}=&2d^2 s\Re \IOT{\Delta w \nabla \varphi \cdot \nabla \overline{w}}\\
=&-2d^2 s\Re \IOT{\nabla w\cdot \nabla (\nabla \varphi \cdot \nabla \overline{w})}+2d^2 s\Re \int_{0}^T \int_{\partial \Omega}(\nabla \varphi \cdot \nabla \overline{w})\pnu w d\sigma dt\\
=&K_1+K_2.
\end{align*}

On the one hand, we have 
\begin{align*}
K_1 
=&-2d^2 s \Re \IOT{\left( \nabla^2 \varphi(\nabla w,\nabla \overline{w})+\dfrac{1}{2} \nabla \varphi \cdot \nabla (|\nabla w|^2)\right)}\\
=&-2d^2 s\Re \IOT{\nabla^2 \varphi(\nabla w,\nabla \overline{w})}+d^2 s\IOT{\Delta \varphi |\nabla w|^2}\\
&-d^2 s\int_0^T \int_{\partial \Omega} \pnu \varphi |\nabla w|^2 d\sigma dt.
\end{align*}

Explicit computations show that
\begin{align*}
\nabla^2 \varphi(\nabla w,\nabla \overline{w})=-\lambda^2 \theta |\nabla w\cdot \nabla \psi|^2 -\lambda \theta \nabla^2 \psi(\nabla w,\nabla w),
\end{align*}
and
\begin{align*}
\nabla w=\pnu w \nu,\text{ on }\Gamma_0\times (0,T), \quad \pnu \varphi =-\lambda \theta \pnu \psi \text{ on }\partial \Omega\times (0,T).
\end{align*}

Then,
\begin{align*} 
K_1=&2d^2 s \Re \IOT{(\lambda^2 \theta |\nabla \psi \cdot \nabla w|^2 +\lambda \theta \nabla^2 \psi (\nabla w,\nabla \overline{w}))}\\
&-d^2 s\lambda \IOT{(\lambda |\nabla \psi|^2 +\Delta \psi)\theta |\nabla w |^2}\\
&+d^2 s\lambda \IGOT{\pnu \psi \theta |\pnu w|^2}+d^2 s\lambda \IGT{\pnu \psi \theta (|\pnu w|^2 +|\nabla_\Gamma w|^2)}.
\end{align*}

On the other hand,
\begin{align*}
K_2 
=&2d^2 s\IGOT{\pnu \varphi |\pnu w|^2}+2d^2 s\IGT{\pnu \varphi |\pnu w|^2}\\
=&-2d^2 s\lambda \IGOT{\pnu \psi \theta |\pnu w|^2}-2d^2 s\lambda \IGT{\pnu \psi \theta |\pnu w|^2}.
\end{align*}

Thus, 
\begin{align}
\label{I22} 
\begin{split} 
I_{22}=&2d^2 s\lambda \IOT{\left(\lambda \theta|\nabla \psi \cdot \nabla w|^2 + \theta \nabla^2 \psi(\nabla w,\nabla \overline{w}) \right)}\\
&-d^2 s\lambda \IOT{(\lambda |\nabla \psi|^2 +\Delta \psi)\theta |\nabla w|^2}\\
&-d^2 s\lambda \IGOT{\pnu \psi \theta |\pnu w|^2}-d^2 s\lambda \IGT{\pnu \psi \theta |\pnu w|^2}\\
&+d^2 s\lambda \IGT{\pnu \psi \theta |\nabla_\Gamma w|^2}.
\end{split} 
\end{align}

Now, $I_{23}$ is given by
\begin{align*}
I_{23}=& {\RM - } ds\Re i\IOT{\pt \varphi \overline{w}\Delta w}\\
=&{\RM - }  ds\Im \IOT{\overline{w}\nabla (\pt \varphi)\cdot \nabla w}
{\RM  + }  ds\Im \IGT{\pt \varphi \overline{w}\pnu w}. 
\end{align*}

By Young's inequality, for all $\epsilon>0$ there exists a constant $C=C(\epsilon)$ such that 
\begin{align}
\label{I23} 
\begin{split}
I_{23}\geq &-\IOT{\epsilon s\lambda \theta |\nabla w|^2 + C(\epsilon) s\lambda \theta^3 |w|^2}\\
&-\IGT{(\epsilon s\lambda \theta |\pnu w|^2 +C(\epsilon)s\lambda \theta^3 |w|^2)}.
\end{split}
\end{align}

Therefore, adding inequalities \eqref{I21}, \eqref{I22} and \eqref{I23}, choosing $\epsilon>0$ small enough and taking $\lambda_0$ and $s_0$ sufficiently large, we get
\begin{align}
\label{I2k} 
\begin{split}
\sum_{k=1}^3 I_{2k}\geq &C s\lambda \IOT{\theta |\nabla w|^2} +Cs\lambda \IGOT{\theta |\pnu w|^2}\\
& +Cs\lambda \IGT{\theta |\pnu w|^2}+d^2s\lambda \IGT{\pnu \psi \theta |\nabla_\Gamma w|^2}\\
&-C s\lambda \IOT{\theta^3 |w|^2}-Cs\lambda \IGT{\theta^3 |w|^2},
\end{split}
\end{align}
for all $\lambda \geq \lambda_0$ and for all $s\geq s_0$.

Moreover, 
\begin{align}
\label{I31} 
I_{31}=ds\Re i \IOT{\Delta \varphi \overline{w}\pt w}=-ds\Im \IOT{\Delta \varphi \overline{w}\pt w}
\end{align}

Integrating by parts, first in time and then in space, we have
\begin{align}
\nonumber 
I_{32}= 
&2ds \Im  \IOT{\pt \overline w \nabla \varphi \cdot \nabla {w}}\\
\nonumber 
=&-2ds\Im \IOT{ \overline{w}\nabla \varphi \cdot \nabla \pt w}-2ds \Im \IOT{\overline{w}\nabla \pt \varphi \cdot \nabla w}\\
\nonumber  
=&-ds \Im \IOT{\overline{w} \nabla \pt \varphi \cdot \nabla w}+ds \Im \IOT{\Delta \varphi \overline{w}\pt w}\\
\nonumber  
&-ds\Im \IGT{\pnu \varphi \overline{w}\pt w}.
\end{align} 

By Young's inequality, for all $\epsilon>0$ there exists a constant $C=C(\epsilon)$ such that
\begin{align}
\label{I32}
\begin{split} 
I_{32}\geq &-\epsilon s\lambda \IOT{\theta |\nabla w|^2}-C(\epsilon)s\lambda \IOT{\theta^3 |w|^2}\\
&+ds\Im \IOT{\Delta \varphi \overline{w}\pt w}-ds \Im \IGT{\pnu \varphi \overline{w}\pt w}.
\end{split} 
\end{align}

For the term $I_{33}$ we have:
\begin{align}
\nonumber 
I_{33}=& \Re s\IOT{\pt \varphi \overline{w}\pt w}= + \dfrac{1}{2}s  \IOT{\pt \varphi \pt (|w|^2)}\\
\label{I33} 
\geq & -Cs \IOT{\theta^3 |w|^2}.
\end{align}

Now, adding \eqref{I31}, \eqref{I32} and \eqref{I33} we conclude that 
\begin{align}
\label{I3k}
\begin{split} 
\sum_{k=1}^3 I_{3k}\geq & -\epsilon s\lambda \IOT{\theta |\nabla w|^2}-C(\epsilon)s\lambda^3 \IOT{\theta^3 |w|^2}\\
&+ds\lambda \Im \IGT{\pnu \psi \theta \overline{w}\pt w}. 
\end{split} 
\end{align}

Now, sum up inequalities \eqref{I1k}, \eqref{I2k} and \eqref{I3k}, using Young's inequality and taking $\lambda_0$ and $s_0$ large enough, we obtain
\begin{align}
\label{P1P2} 
\begin{split}
&\Re \IOT{P_1 w \overline{P_2 w}}\\
\geq & C\IOT{\left(s^3 \lambda^4 \theta^3 |w|^2 +s\lambda \theta |\nabla w|^2 + s\lambda^2 \theta |\nabla \psi \cdot \nabla w|^2 \right)}\\
&+C  \IGT{( s^3\lambda^3 \theta^3 |w|^2 +s\lambda \theta |\pnu w|^2)} -d^2 s\lambda \IGOT{\pnu \psi \theta |\pnu w|^2}\\
&+d^2 s\lambda \IGT{\pnu \psi \theta |\nabla_\Gamma w|^2}+ds\lambda \Im \IGT{\pnu \psi \theta \overline{w}\pt w}.
\end{split}
\end{align}

Notice that the global term of $\nabla_\Gamma w$ in the last inequality is not strictly positive due to $\pnu \psi<0$ on $\Gamma_1$. As we shall see in the next step, this difficulty can be avoided choosing $d$ and $\delta$ according to assumption \eqref{assumption:delta:d}. 

\noindent $\bullet$ {\bf Step 3: Estimates on $\Gamma_1\times (0,T)$.}

In this step, we devote to compute the terms defined on $\Gamma_1\times (0,T)$. By definition of $Q_1$ and $Q_2$, we have
\begin{align*}
\Re \IGT{Q_1 w \overline{Q_2w}}=\sum_{j=1}^2 \sum_{k=1}^2 J_{jk},
\end{align*}
where $J_{jk}$ stands for the $L^2(\Gamma_1\times (0,T))$-inner product between the $j^\text{th}$-term of $Q_1w$ and the $k^{th}$-term of $Q_2w$.

Firstly, notice that the term $J_{11}$ can be written as follows:
\begin{align*}
J_{11}=&-\delta ds \Re \IGT{\pnu \varphi \overline{w}\Delta_\Gamma w}\\
=&-\delta d s\lambda \IGT{\pnu \psi \theta |\nabla_\Gamma w|^2}+\delta d s \Re \IGT{\overline{w}\nabla_\Gamma(\pnu \varphi)\cdot \nabla_\Gamma w}.
\end{align*}

Using the formula $\text{div}_{\Gamma}(w\nabla_\Gamma (\pnu \psi))=\nabla_\Gamma (\pnu \psi)\cdot \nabla_\Gamma w+w\Delta_\Gamma (\pnu \psi)$ and integrating by parts we easily deduce that
\begin{align*}
&-\delta ds\lambda \Re \IGT{\overline{w}\theta \nabla_\Gamma(\pnu \psi)\cdot \nabla_\Gamma w}\\
=&\delta ds\lambda \Re \IGT{w\nabla_\Gamma (\overline{w}\theta) \nabla_\Gamma (\pnu \psi)}+\delta ds\lambda \IGT{\Delta_\Gamma (\pnu \psi)\theta |w|^2}\\
=&\dfrac{1}{2}\delta ds \lambda \IGT{\Delta_\Gamma (\pnu \psi) \theta |w|^2},
\end{align*}
where we have used that $\nabla_\Gamma \theta=0$ on $\Gamma_1\times (0,T)$.

In the same manner, $J_{12}$ can be estimate as follows:
\begin{align*}
J_{12}=&\delta s \Im  \IGT{\pt \varphi \overline{w}\Delta_\Gamma w}\\
=&-\delta s\Im \IGT{\overline{w}(\nabla_\Gamma \pt \varphi)\cdot \nabla_\Gamma w}-\delta s\Im \IGT{\pt \varphi |\nabla_\Gamma w|^2}\\ 
=&0,
\end{align*}
since $\nabla_\Gamma \psi=0$ on $\Gamma_1$.

On the other hand, the term $J_{21}$ is given by
\begin{align*}
J_{21}=&-ds\Re i \IGT{\pnu \varphi \overline{w}\pt w}
=ds\Im \IGT{\pnu \varphi \overline{w}\pt w}\\
=&-ds\lambda \Im \IGT{\pnu \psi \theta \overline{w}\pt w}.
\end{align*}

Moreover, 
\begin{align*}
J_{22}= s\Re \IGT{\pt \varphi \overline{w}\pt w}\geq -Cs\IGT{\theta^2 |w|^2}.
\end{align*}

According to the above computations, we easily deduce that 
\begin{align}
\label{Q1Q2} 
\begin{split} 
&\Re \IGT{Q_1 w \overline{Q_2 w}}\\
\geq & -\delta d s\lambda \IGT{\pnu \psi |\nabla_\Gamma w|^2}-Cs\lambda \IGT{\theta |w|^2}\\
&-Cs \IGT{\theta^2 |w|^2}-ds\lambda \Im \IGT{\pnu \psi \theta \overline{w}\pt w}.
\end{split} 
\end{align}

In the next step, we gather the terms obtained in the steps 2 and 3 to conclude the desired Carleman estimate.

\noindent $\bullet$ {\bf Step 4: Last arrangements and conclusion}

Adding the inequalities \eqref{P1P2} and \eqref{Q1Q2}, using Young's inequality, and taking $s_0$ and $\lambda_0$ large enough if it is neccesary, we obtain
\begin{align}
\label{conclu:01}
\begin{split}
&\IOT{(s^3\lambda^4 \theta^3 |w|^2 +s\lambda |\nabla w|^2 +s\lambda^2\theta  |\nabla \psi \cdot \nabla w|^2)}\\
&+\IGT{(s^3\lambda^3 |w|^2 +s\lambda |\pnu w|^2)}\\
&+ d(\delta -d)s\lambda\IGT{ |\pnu \psi | |\nabla_\Gamma w|^2}\\
\leq & C\IOT{|Pw|^2}+C\IGT{|Qw|^2}+Cs\lambda \IGOT{|\pnu w|^2}\\
&+C\IOT{|Rw|^2}+C\IGT{|R_\Gamma w|^2},
\end{split}
\end{align}
for all $\lambda \geq \lambda_0$ and $s\geq s_0$. We notice that the global term of $\nabla_\Gamma w$ on $\Gamma_1\times (0,T)$ is positive since we assumed the condition \eqref{assumption:delta:d}. Moreover, it is easy to see that

\begin{align*}
&\IOT{|Rw|^2}+ \IGT{|R_\Gamma w|^2}\\
\leq & C\IOT{\left(s^2\lambda^2\theta^2|w|^2 + |\nabla w|^2 \right)}\\
&+ C\IGT{\left(s^2\lambda^2 \theta^2|w|^2 + |\pnu w|^2 + |\nabla_\Gamma w|^2\right)}.
\end{align*}

Therefore, the last two terms of the right-hand side of \eqref{conclu:01} can be absorbed
by the left hand side taking $s_0$ and $\lambda_0$ sufficiently large. 

Finally, we come back to the original variable $v=e^{s\varphi}w$.
Taking into account that 
$$e^{-2s \varphi}  |\nabla v|^2 \leq C (s^2\lambda^2\theta^2 |w|^2 + |\nabla w|^2 )
$$
and that, for each $v \in \mathcal{V}$,
$$ 
\pnu w = e^{s \varphi} \pnu v \quad \text{ on } \Gamma_0, 
$$
we 
 obtain  \eqref{Carleman:estimate:01}. This concludes the proof of  Theorem \ref{Thm:Carleman}. 

\subsection{Proof of the Corollary \ref{Corollary:Carleman}}
\label{section:proof:corollary:carleman}

We start considering a real function $\eta \in C^\infty(\overline{\Omega})$ such that $\eta=1$ in $\overline{\Omega}\setminus \omega$ and vanishing close to $\Gamma_*$. if $w=\eta v$ in $\Omega\times (0,T)$ and $w_\Gamma=\eta v_\Gamma$ on $\Gamma_1\times (0,T)$, then it is easy to see that $(w,w_\Gamma)$ satisfies the equations
\begin{align*}
L(w)=\eta L(v)+2d\nabla \eta\cdot \nabla v+\Delta \eta v,\text{ in }\Omega\times (0,T),\quad N(w,w_\Gamma)=0,\text{ on }\Gamma_1\times (0,T),
\end{align*}
and
\begin{align*}
w=\pnu w=0, \text{ on } \Gamma_* \times (0,T),\quad w=w_\Gamma,\text{ on }\Gamma_1\times (0,T).
\end{align*}

Besides, since $v = w$ in $\Omega\setminus \omega$, we point out that
\begin{align}
\label{estimate:cor:01}
\begin{split} 
\IOT{e^{-2s\varphi} \theta |\nabla w|^2} & \geq \int_0^T \int_{\Omega\setminus \omega} e^{-2s\varphi}\theta |\nabla w|^2 dxdt \\
& = \int_0^T \int_{\Omega\setminus \omega} e^{-2s\varphi}\theta |\nabla v|^2 dxdt.
\end{split}
\end{align}

Then, applying the Carleman inequality \eqref{Carleman:estimate:01} to $w=\eta v$ and taking into account the estimate \eqref{estimate:cor:01} we easily get
\begin{align}
\label{distributed:carleman:g1}
\begin{split}
&\IOT{e^{-2s\varphi} (s^3\lambda^4 \theta^3 |v|^2 +s\lambda \theta |\nabla v|^2}\\
&+\IGT{e^{-2s\varphi}(s^3\lambda^3 \theta^3 |v|^2 +s\lambda\theta |\nabla_\Gamma v_\Gamma|^2 +s\lambda \theta |\pnu v|^2)}\\
\leq & C \IOT{e^{-2s\varphi} |L(v)|^2}+ C\IGT{e^{-2s\varphi} |N(v,v_\Gamma)|^2}\\
&+C\int_0^T\int_{\omega} e^{-2s\varphi} (s^3\lambda^4 \theta^3 |v|^2 +s\lambda \theta |\nabla v|^2)dxdt\\
&+C \IOT{e^{-2s\varphi}(|v|^2+|\nabla v|^2)},
\end{split}
\end{align}
for all $\lambda \geq \lambda_0$ and for all $s\geq s_0$. We notice that, the last two terms of the right-hand side of \eqref{distributed:carleman:g1} can be absorbed by taking $\lambda_0$ and $s_0$ sufficiently large if it is necessary. This completes the proof of the Corollary \ref{Corollary:Carleman}.

\section{Proof of the Theorem \ref{Thm:Controllability}}
\label{section:proof:thm:controllability:boundary}
We introduce the adjoint system of \eqref{intro:problem:01}
\begin{align}
\label{adjoint:z}
\begin{cases}
i\pt z+d\Delta z+ i\vec{r}\cdot \nabla z + qz=0,&\text{ in }\Omega\times (0,T),\\
i\pt z_\Gamma -d\pnu z +\delta \Delta_\Gamma z_\Gamma +i\vec{r}_\Gamma\cdot \nabla_\Gamma z_\Gamma +q_\Gamma z_\Gamma =0,&\text{ on }\Gamma_1\times (0,T),\\
z=z_\Gamma,&\text{ on }\Gamma_1\times (0,T),\\
z=0,&\text{ on }\Gamma_0\times (0,T),\\
(z,z_\Gamma)(T)=(z_T,z_{\Gamma,T}),&\text{ in }\Omega\times \Gamma_1,
\end{cases}
\end{align}
where $(q,q_\Gamma)$ are given by
\begin{align}
	\label{def:q:qg:adjoint}
	q:=i\text{div}(\vec{r}) + \overline{q}_0,\quad q_\Gamma:= i\text{div}_\Gamma (\vec{r}_\Gamma) -i\vec{r}\cdot \nu +\overline{q}_{\Gamma,0}.
\end{align}

Since $\vec{r}\in [L^\infty(0,T;W^{2,\infty}(\Omega;\mathbb{R}))]^n$ and $\vec{r}_\Gamma \in [L^\infty(0,T;W^{2,\infty}(\Gamma;\mathbb{R}))]^n$, $q$ and $q_\Gamma$ defined in \eqref{def:q:qg:adjoint} belongs to $L^\infty(0,T;W^{1,\infty}(\Omega))$ and $L^\infty(0,T;W^{1,\infty}(\Gamma_1))$, respectively, and therefore (using the change of variables $t\mapsto T-t$) problem \eqref{adjoint:z} has a unique solution $(z,z_\Gamma)\in C^0([0,T];\mathcal{V})$.  

On the other hand, according to Proposition \ref{prop:exact:controllability}, the exact controllability of \eqref{intro:problem:01} is equivalent to prove the observability inequality 
\begin{align}
\label{teo:19:01}
\|(z_T,z_{\Gamma,T})\|_\mathcal{V}^2 \leq C \int_0^T\int_{\Gamma_\ast}{|\pnu z|^2} d\sigma dt,
\end{align}
for all $(z_T,z_{\Gamma,T})\in \mathcal{V}$ and $(z,z_\Gamma)$ being the associated solution of \eqref{adjoint:z} and $\Gamma_\ast$ is given by \eqref{def:Gamma-ast}.
	Firstly, we shall introduce a cut-off function $\zeta \in C^1([0,T];\mathbb{R})$ such that 
	\begin{align*}
		0\leq \zeta \leq 1,\quad \zeta=0,\quad \forall t\in [0,T/4],\quad \zeta=1,\quad \forall t\in [3T/4,T].
	\end{align*} 
	Then, the variables $(w,w_\Gamma)=(\zeta z,\zeta z_\Gamma)$ solves the problem
	\begin{align}
		\begin{cases}
			i\pt w+d\Delta w+ i\vec{r}\cdot \nabla w+ qw= i \zeta' z,&\text{ in }\Omega\times (0,T),\\
i\pt w_\Gamma -d\pnu w +\delta \Delta_\Gamma w_\Gamma +i\vec{r}_\Gamma\cdot \nabla_\Gamma w_\Gamma +q_\Gamma w = i \zeta' z_\Gamma,&\text{ on }\Gamma_1\times (0,T),\\
w=w_\Gamma,&\text{ on }\Gamma_1\times (0,T),\\
w=0,&\text{ on }\Gamma_0\times (0,T),\\
(w,w_\Gamma)(0)=(0,0),&\text{ in }\Omega\times \Gamma_1,
		\end{cases}
	\end{align}
	By Proposition \ref{WP:prop:02}, $(w,w_\Gamma)\in C^0([0,T];\mathcal{V})$ and since $\text{supp}(\zeta')\subseteq (T/4,3T/4)$ the following inequality holds
	\begin{align*}
		\|(w, w_\Gamma)\|_{C^0([T/4,T];\mathcal{V})}^2 \leq C \|(z,z_\Gamma)\|_{L^1(T/4,3T/4;\mathcal{V})}^2.
	\end{align*} 
	In particular, we can assert that 
	\begin{align}
	\label{teo:19:02}
		\|(z_T,z_{\Gamma,T})\|_{\mathcal{V}}^2 \leq C \|(z,z_\Gamma)\|_{L^2(T/4,3T/4;\mathcal{V})}^2.
	\end{align}
	Moreover, by Carleman estimate (see Theorem \ref{Thm:Carleman}) applied to $(z,z_\Gamma)$ we obtain
	\begin{align}
	\label{teo:19:03}
	\begin{split}
		&\int_{T/4}^{3T/4}\int_\Omega e^{-2s\varphi}(s^3\lambda^4 \theta^3 |z|^2 + s\lambda^2 \theta |\nabla z|^2)dxdt\\
		&+ \int_{T/4}^{3T/4}\int_{\Gamma_1} e^{-2s\varphi}(s^3\lambda^3 \theta^3 |z_\Gamma|^2 + s\lambda \theta |\nabla_\Gamma z_\Gamma|^2)d\sigma dt\\
		\leq& C s\lambda\int_0^{T}\int_{\Gamma_*} e^{-2s\varphi}\theta |\pnu z|^2 d\sigma dt, 
	\end{split}
	\end{align}
	for all $s\geq s_0$ and $\lambda \geq \lambda_0$. Now, we fix $s$ and $\lambda$. Since $\varphi$ is bounded, there exists a constant $C>0$ such that
	\begin{align}
	\label{teo:19:04}
		1\leq C e^{-2s\varphi},\quad \forall x\in \overline{\Omega} \times (T/4,3T/4).
	\end{align}
	Now, combining \eqref{teo:19:02}, \eqref{teo:19:03} and \eqref{teo:19:04} we easily deduce \eqref{teo:19:01} and therefore the proof of the Theorem \ref{Thm:Controllability} is done.     

\section*{Acknowledgments}
The authors are indebted to the anonymous referees for
their comments and suggestions, which in particular allowed us to improve the scope 
of our problem. 


\bibliographystyle{abbrv}
\bibliography{biblio}

\end{document}